\documentclass[11pt]{article}
\usepackage[T1]{fontenc}
\usepackage[utf8]{inputenc}
\usepackage{lmodern,microtype,booktabs,enumitem,tikz, comment}
\usepackage[a4paper,margin=1in]{geometry}
\usepackage{mathtools,amssymb,amsthm}
\usepackage{xcolor}
\usepackage[colorlinks=true,linkcolor=blue!45!black,citecolor=blue!45!black,urlcolor=blue!45!black]{hyperref}

\hypersetup{pdftitle={
Helly theorems for lattice of flats of a matroid
},pdfauthor={Amir Jafari and Kiumars Kaveh},pdfsubject={Helly theorems for lattice of flats of a matroid},pdfkeywords={matroid, adapted basis, Helly number, lattice of flats, distributive lattice, Bergman fan}}

\newtheorem{theorem}{Theorem}[section]
\newtheorem{lemma}[theorem]{Lemma}
\newtheorem{proposition}[theorem]{Proposition}
\newtheorem{corollary}[theorem]{Corollary}
\newtheorem*{maintheorem}{Main theorem}
\theoremstyle{remark}
\newtheorem{remark}[theorem]{Remark}

\theoremstyle{definition}
\newtheorem{definition}[theorem]{Definition}

\theoremstyle{definition}
\newtheorem{example}[theorem]{Example}

\DeclareMathOperator{\rk}{rk}
\DeclareMathOperator{\cl}{cl}
\newcommand{\cF}{\mathcal F}
\newcommand{\cI}{\mathcal I}
\numberwithin{equation}{section}
\title{Helly theorems for lattice of flats of a matroid}
\author{Amir Jafari\\[2pt]
\small Department of Mathematics\\
\small New Uzbekistan University\\
\small Tashkent, Uzbekistan\\
\small\href{mailto:a.jafari@newuu.uz}{\texttt{a.jafari@newuu.uz}}
\and Kiumars Kaveh\\[2pt]
\small Department of Mathematics\\
\small University of Pittsburgh\\
\small Pittsburgh, PA 15260, USA\\
\small\href{mailto:kaveh@pitt.edu}{\texttt{kaveh@pitt.edu}}}
\begin{document}
\maketitle

\begin{abstract}
A basis of a matroid is adapted to a flat if it contains a basis of the 
flat. We prove that a family of flats in a rank $r$ matroid has a common
adapted basis whenever every subfamily of at most $r+1$ flats does.
The ground set and the family may be infinite; only the rank must be
finite. This result, in particular,
contains Klyachko's vector space Helly theorem for subspaces. As an application, we obtain 
%sharp uniform Helly bounds for flags and apartments in a Bergman fan, and 
a splitting criterion for tropical toric vector bundles on projective spaces. As a secondary application, we derive quadratic bounds for the
Helly number for distributive generation in the lattice of flats.
\end{abstract}

\noindent\small\textit{2020 Mathematics Subject Classification.}
Primary 05B35; Secondary 06C10, 06D05, 52A35, 14T15.\par
\noindent\textit{Keywords.} Matroid, adapted basis, Helly number, lattice
of flats, distributive lattice, Bergman fan.\normalsize

\section{Introduction}
Helly’s classical theorem asserts that a finite family of convex subsets of \(\mathbb R^n\) has nonempty intersection if every subfamily of at most \(n+1\) members has nonempty intersection \cite{Helly1923}. %It is an important example of a general principle which states that a global compatibility condition can be checked on subfamilies of bounded size. 
In this paper, we prove an analogue of the Helly theorem for the lattice of flats of a matroid.%, with the existence of a common adapted basis as the compatibility condition.

Let \(M\) be a matroid of finite rank \(r\), with ground set \(E\). A basis \(B\) of \(M\) is said to be \emph{adapted} to a flat \(F\) if \(B\cap F\) is a basis of the restriction \(M|F\).
A family of flats has a \emph{common adapted basis} if there is a basis of \(M\) that is adapted to every flat in the family. Our main result gives a sharp Helly theorem for the existence of such a basis.

\begin{theorem}[Helly theorem for adapted bases] \label{th-intro-Helly}
Let \(M\) be a matroid of finite rank \(r\), and let \(\mathcal F\) be any family of its flats. If every subfamily of at most \(r+1\) members has a common adapted basis, then \(\mathcal F\) has a common adapted basis. For every \(r\geq2\), the number \(r+1\) is optimal.
\end{theorem}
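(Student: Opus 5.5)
The optimality claim is the easy part, and I would dispose of it first. Take the uniform matroid $U_{r,r+1}$ on $E=\{e_0,\dots,e_r\}$ and the family $\mathcal F=\{\{e_0\},\dots,\{e_r\}\}$ of its $r+1$ rank-one flats. Any $r$ of these singletons together form a basis adapted to each of them. A basis adapted to all $r+1$, however, would have to contain all of $E$, which is impossible. Hence $r+1$ cannot be lowered to $r$. I would check whether the hypothesis $r\ge2$ is needed only to exclude degenerate small cases, or whether a sharper example is intended there.

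For the positive statement I would first reduce to finite $E$ and finite $\mathcal F$.

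\begin{itemize}[leftmargin=*]
\item Since $\rk(M)=r$ is finite, the condition ``$B$ is adapted to $F$'' depends only on the finite set $B\cap F$. It is equivalent to $\rk(B\cap F)=\rk F$.
\item I would set up a compactness argument: a Rado-selection or Tychonoff argument over the finitely-supported choices $B$, or more simply an ultrafilter over the finite subfamilies.
\item The plan is to show that if every finite subfamily has a common adapted basis, then so does $\mathcal F$. The key is that a basis has only $r$ elements.
\item Concretely, I would fix for each finite $\mathcal G\subseteq\mathcal F$ a common adapted basis $B_{\mathcal G}$. I would then pass to an ultrafilter on finite subfamilies, refining the indexing cofinal family as needed.
\item I would try to show that the $B_{\mathcal G}$ stabilize, up to choosing elements from equivalent parallel classes inside the relevant flats. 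That gives the limit basis.
\end{itemize}

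This step needs care for infinite $E$, and I expect it to be technical but routine.

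The heart of the proof is the finite case, which I would do by induction on $r$ using contraction. The idea is to choose an element $e$ that may be put into a common adapted basis and then pass to $M/e$, which has rank $r-1$.

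\begin{itemize}[leftmargin=*]
\item \emph{Choice of $e$.} Let $A$ be a minimal nonempty (rank $\ge1$) member of the meet-closure of $\mathcal F$, and let $e\in A$ be non-loop.
\item \emph{Translating adaptedness.} For $F\ni e$, a basis $B\ni e$ is adapted to $F$ iff $B\setminus e$ is adapted to $F/e$ in $M/e$.
\item For $F\not\ni e$, adaptedness of $B$ to $F$ is a condition on $B\setminus e$ relative to $F$ viewed in $M/e$, where $F$ stays a flat of $M/e$ only after closure.
\item \emph{The resulting family.} I would therefore replace $\mathcal F$ by the family $\mathcal F'$ in $M/e$ consisting of
\[
\{F/e: e\in F\}\cup\{\cl_{M/e}(F): e\notin F\}
\]
together with the sets $F\cap(\text{flats containing }e)$. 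The point is to keep track of $B\cap F$ itself, not just of its closure with $e$.
\end{itemize}

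The main obstacle is exactly this bookkeeping. One must show that a common adapted basis of $\mathcal F'$ in $M/e$, with $e$ added back, is adapted to each $F\not\ni e$. One must also show that the Helly hypothesis for $\mathcal F$ with parameter $r+1$ implies it for $\mathcal F'$ with parameter $r$. The loss of one in the Helly number must be paid for by the flat $A$.

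What I would try first is the following. Given at most $r$ members of $\mathcal F'$, I would add to them a flat from $\mathcal F$ whose meets produce $A$, to reach at most $r+1$ flats of $\mathcal F$. Then I would apply the hypothesis and use basis exchange inside $A$ to force $e$ into the resulting basis. The minimality of $A$ is what should make this exchange legal: every element of $A$ can be swapped for $e$ without destroying adaptedness to flats meeting $A$ trivially or fully.

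If this direct exchange fails, my fallback is a rank-function criterion. I would characterize a common adapted basis for a finite family as a common independent set in a suitable matroid intersection, or via submodular inequalities on the lattice generated by $\mathcal F$. Each such violated inequality involves at most $r+1$ flats, because any chain of flats has length at most $r+1$. That would give the Helly number directly.
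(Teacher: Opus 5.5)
Your finite case carries all the content of the theorem, and it is not established. You leave the post-contraction bookkeeping open, and as set up it does not close.

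\textbf{The reduced family is not a family of flats.} After contracting $e$, a flat $F\not\ni e$ is no longer a flat of $M/e$. The set $B'\cup e$ is adapted to $F$ if and only if $B'$ is adapted to $\cl_{M/e}(F)$ \emph{and} misses $\cl_{M/e}(F)\setminus F$. For example, in $U_{3,4}$ with $F=\{e_1,e_2\}$ and $e=e_3$, one has $\cl_{M/e}(F)=E\setminus\{e_3\}$, so the whole constraint is avoiding $e_4$. Your induction hypothesis is about flats, so it does not apply to $\mathcal F'$. You would need a strengthened inductive statement, and the naive one for arbitrary subsets is false: for $n$ parallel elements in rank one, any $n-1$ of the sets $E\setminus\{e_i\}$ have a common adapted basis, but all $n$ do not.

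\textbf{The Helly transfer miscounts.} The exchange that puts $e$ into the basis is justified when the basis is already adapted to $A$. Otherwise the fundamental circuit of $e$ can leave $A$, and the swap can break adaptedness to a flat missing $A$. But $A$ is in general a meet of several members of $\mathcal F$, as with two planes in $K^3$ cutting out a line. Your extra sets $F\cap G$ either involve flats $G\notin\mathcal F$, about which the hypothesis says nothing, or cost two members of $\mathcal F$ each. So you do not stay within $r+1$ original flats.

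\textbf{The fallback does not supply the bound.} That chains of flats have length at most $r+1$ says nothing about how many generators a violated valuation identity involves.

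\textbf{What the paper does instead.} It uses restriction rather than contraction, and gets $r+1$ from the size of a circuit.
\begin{enumerate}[leftmargin=*,itemsep=1pt]
\item For $F_i$ of rank $k<r$, any $k+1$ of the flats $F_i\cap F_j$ together with $F_i$ are at most $r+1$ members of $\mathcal F$. Induction in $M|F_i$ then gives a basis $B_i$ of $F_i$ adapted to every intersection of members of $\mathcal F$ inside $F_i$.
\item Lemma~\ref{intersection-bases} glues these into one set $Q$ meeting every such intersection $W$ in a basis of $W$. The local basis $B_i$ for some $F_i\supseteq W$ lets one count $|Q\cap(U_1\cup\cdots\cup U_s)|$ by inclusion--exclusion inside a single independent set. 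This is needed because rank itself does not satisfy inclusion--exclusion.
\item If $Q$ were dependent, a circuit in $Q$ has at most $r+1$ elements, so it lies in $\bigcup_{j\in J}F_j$ with $|J|\le r+1$. Inclusion--exclusion then gives $\sum_{\varnothing\ne K\subseteq J}(-1)^{|K|+1}\rk(\bigcap_{k\in K}F_k)>\rk(\bigcup_{j\in J}F_j)$. This is impossible for flats with a common adapted basis (Lemma~\ref{circuit-certificate}).
\end{enumerate}
Hence $Q$ is independent, and any basis extending it is a common adapted basis.

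\textbf{Two smaller points.} First, the infinite case needs no compactness. A basis $B$ is adapted to exactly the $2^r$ flats $\cl(S)$, $S\subseteq B$, so if every finite subfamily has a common adapted basis, then $\mathcal F$ has at most $2^r$ members. Your ultrafilter limit of $r$-element sets could also lose elements when $E$ is infinite. Second, $r\ge2$ is genuinely needed in the sharpness example. The singletons of $U_{1,2}$ are not flats, and in rank one every family has a common adapted basis.
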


Neither the ground set nor the family of flats is required to be finite. For each flat \(F\), let \(\mathcal B_F\) denote the set of bases adapted to \(F\). Theorem \ref{th-intro-Helly} says that
\(
\bigcap_{F\in\mathcal F}\mathcal B_F
\)
is nonempty whenever the intersection of every at most \(r+1\) of these sets is nonempty. Thus it is a Helly theorem in the literal intersection sense. We define the Helly number \(h_{\mathrm{cb}}(r)\) to be the smallest number $h$ such that the following holds: for any family of flats, whenever any subfamily of at most $h$ flats has a common adapted basis then the whole family has a common adapted basis (cf. \cite[Section 6]{HuangKaveh2024}). Theorem \ref{th-intro-Helly} says that
\[
h_{\mathrm{cb}}(r)=r+1\qquad(r\geq2).
\]
%Sharpness is witnessed by the \(r+1\) singleton flats of \(U_{r,r+1}\), the cycle matroid of a cycle with \(r+1\) edges: every \(r\) of these flats have a common adapted basis, whereas all \(r+1\) do not.
To show that the $r+1$ bound is sharp, consider the uniform matroid \(M = U_{r,r+1}\) and take the family of $r+1$ singleton flats in $M$. 

Our motivation comes from a remarkable theorem of Klyachko \cite[Theorem 6.1.2]{Klyachko1990}, which gives the same \(r+1\) criterion for a finite collection of subspaces of an \(r\)-dimensional vector space. Here a basis is adapted to a subspace when the subspace is spanned by a subset of the basis. Taking the matroid of all vectors in the ambient space recovers this statement from Theorem \ref{th-intro-Helly}.
%while the matroid formulation also applies to nonrepresentable matroids. 
We remark that Klyachko’s proof uses the modularity of dimension:
\[
\dim U+\dim V=\dim(U\cap V)+\dim(U+V).
\]
But for an arbitrary matroid, the rank function satisfies only the submodular inequality
\[
\operatorname{rk}(F)+\operatorname{rk}(G)
\geq
\operatorname{rk}(F\cap G)+\operatorname{rk}(F\vee G).
\]
%and equality can fail. 
%Extending the argument therefore requires a way to control dependencies without assuming this equality.
We prove Theorem \ref{th-intro-Helly} by induction on rank and restriction to proper flats. %Local adapted bases allow us to construct a finite set \(Q\) whose intersection with every nonempty intersection of the original flats is a basis of that intersection. If \(Q\) were dependent, a circuit in \(Q\), together with inclusion–exclusion, would produce at most \(r+1\) original flats having no common adapted basis. The local hypothesis rules this out, so \(Q\) is independent and extends to the required basis.

%The theorem also applies to flags of flats and yields a common-apartment criterion for Bergman fans: a collection of points in the Bergman fan of a finite loopless rank \(r\) matroid lies in one apartment whenever each subcollection of at most \(r+1\) points does. 

Theorem \ref{th-intro-Helly} has an application to tropical toric vector bundles. A toric vector bundle is a vector bundle on a toric variety together with a linear action of the torus. They have been classified in \cite{Klyachko1990}. Motivated by Klyachko's classification, Khan-Maclagan \cite{KhanMaclagan2024} and Kaveh-Manon \cite{KavehManon2024} introduce a tropical version of toric vector bundles called \emph{tropical toric vector bundles}. 

The Bergman fan of a matroid is a union of subcomplexes, one for each matroid basis, called \emph{apartments} (see \cite{KavehManon2024}). Each apartment can be identified with the Coxeter complex of type A. In \cite{KavehManon2024} a tropical toric vector bundle is described by a map from a fan to the Bergman fan of a matroid that is integral and piecewise linear in the sense that each cone maps to a an apartment in an integral linear way.

A toric vector bundle is said to be \emph{split} if it is equivariantly isomorphic to a sum of toric line bundles. A toric vector bundle is split if the image of the corresponding piecewise linear map lands in one apartment in the extended Tits building (see \cite{Klyachko1990, KavehManon2025}). Motivated by this, a tropical toric vector bundle is said to be \emph{split} when the image of its piecewise linear map lands in one apartment in the Bergman fan. 
%in this case its extension class contains a direct sum of tropical line bundles. We obtain the following consequence.

\begin{corollary}
Every tropical toric vector bundle of rank \(r<n\) on the fan of \(\mathbb P^n\) is split.    
\end{corollary}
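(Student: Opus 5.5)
The corollary will follow from Theorem \ref{th-intro-Helly} together with the combinatorics of the fan of \(\mathbb P^n\). Following \cite{KavehManon2024}, a tropical toric vector bundle of rank \(r\) on a fan \(\Sigma\) is an integral piecewise linear map \(\Phi\colon|\Sigma|\to \mathrm{Berg}(M)\), where \(M\) is a (loopless) matroid of rank \(r\). Each cone \(\sigma\in\Sigma\) is sent linearly into some apartment, that is, into the apartment of some basis \(B_\sigma\) of \(M\). Concretely, \(\Phi\) restricted to a ray \(\rho\) is determined by a flag of flats \(\mathcal L_\rho\) of \(M\) together with integer weights. The image of \(\sigma\) lies in the apartment of \(B\) exactly when \(B\) is adapted to every flat appearing in the flags \(\mathcal L_\rho\) for \(\rho\in\sigma(1)\). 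So splitting means finding a single basis adapted to all flats in \(\mathcal F=\bigcup_{\rho\in\Sigma(1)}\mathcal L_\rho\).

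\textbf{Step 1 (reduction to a Helly condition on rays).} For \(\mathbb P^n\) the fan has \(n+1\) rays \(\rho_0,\dots,\rho_n\), and every proper subset of rays spans a cone. Thus any \(n\) of the rays lie in a common maximal cone \(\sigma\). By the compatibility condition, the union of their flags then has the common adapted basis \(B_\sigma\).

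\textbf{Step 2 (applying the theorem).} Take any subfamily \(\mathcal G\subseteq\mathcal F\) with \(|\mathcal G|\le r+1\). Each flat of \(\mathcal G\) belongs to some flag \(\mathcal L_{\rho_i}\), so \(\mathcal G\) involves at most \(r+1\le n\) distinct rays, using \(r<n\). By Step 1 these rays lie in a common cone, hence \(\mathcal G\) has a common adapted basis. Theorem \ref{th-intro-Helly} (we may discard the trivial case \(r\le1\), where every basis is adapted to all flats) then gives a basis \(B\) adapted to every flat in \(\mathcal F\).

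\textbf{Step 3 (reassembling the map).} A basis adapted to every flat of each flag \(\mathcal L_\rho\) places the image of every ray, and by linearity on cones the image of every cone, inside the apartment \(A_B\). Hence \(\Phi(|\Sigma|)\subseteq A_B\), and the bundle is split.

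The main obstacle is Step 3 together with the precise dictionary of Step 1: one must verify from the definitions in \cite{KavehManon2024} that lying in the apartment \(A_B\) is exactly adaptedness of \(B\) to the flats of the flags, and not a finer condition depending on the weights. I would first check this for a single point of the Bergman fan. A point is a weighted flag, and it lies in \(A_B\) iff each flat of the flag is spanned by elements of \(B\), i.e.\ iff \(B\) is adapted to each flat. With that pointwise statement, the cone-wise and global statements follow immediately.
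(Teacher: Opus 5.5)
Your proposal is correct and is essentially the paper's proof: any at most $r+1\le n$ rays lie in one cone, so their images lie in one apartment. The Helly theorem then yields one basis $B$ with every ray image in $A_B$; the paper invokes it through Corollary~\ref{apartments}, while you apply Theorem~\ref{th-intro-Helly} directly to the union of the ray flags, which is exactly how that corollary is derived. Piecewise linearity does the rest.

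Like the paper, you pass quickly over that last step, and it is not literally immediate: $A_B$ is not convex in $\mathbb R^E$, and $\Phi|_\sigma$ is linear only in the coordinates of $A_{B_\sigma}$. It follows from concavity of each coordinate $\Phi_e$ on $\sigma$, since each is a minimum of the linear $\Phi_c$, $c\in B_\sigma$. For $x=\sum_\rho\lambda_\rho v_\rho$ this gives $\Phi(x)(B)\ge\sum_\rho\lambda_\rho\Phi(v_\rho)(B)=\sum_\rho\lambda_\rho\Phi(v_\rho)(B_\sigma)=\Phi(x)(B_\sigma)$, where the middle equality holds because $B$ and $B_\sigma$ both have maximal weight at each $\Phi(v_\rho)$; hence $B$ has maximal weight at $\Phi(x)$.
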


%The geometry of the fan of \(\mathbb P^n\) is essential here: any at most \(n\) rays belong to a single cone. When \(r<n\), this supplies the local common-apartment condition for the images of the rays. The Helly theorem, together with the linearity properties on each cone, then gives splitting. The rank bound is optimal for \(n\geq2\).

A second theme of the paper concerns distributivity. For a finite collection of vector subspaces, the existence of a common adapted basis is equivalent to distributivity of the sublattice they generate under intersection and sum \cite[\S 2.2.2]{Klyachko1990}. This equivalence is a main step in Klyachko’s proof. For a matroid, a common adapted basis for a family of flats implies the distributivity of the lattice they generate: the flats spanned by subsets of a fixed basis form a Boolean sublattice of \(L(M)\). The converse, however, is not true in general. For example, take the uniform matroid \(U_{3,4}\) on \(E=\{e_1,e_2,e_3,e_4\}\), the flats
\[
F=\{e_1,e_2\},\qquad G=\{e_3,e_4\}
\]
generate the four-element Boolean lattice \(\{\varnothing,F,G,E\}\), but a basis adapted to both would have to contain all four elements. Proposition \ref{adapted-criterion} identifies the precise additional condition: a finite family of flats has a common adapted basis if and only if the lattice it generates is distributive and rank is a valuation on that lattice.

This distinction motivates a separate Helly problem. For \(r\geq2\), define the \emph{distributive Helly number} \(h_{\mathrm{dist}}(r)\) to be the least positive integer \(h\) such that, for every rank \(r\) matroid and every family \(\mathcal F\) of its flats, the lattice generated by \(\mathcal F\) is distributive whenever every subfamily of at most \(h\) members generates a distributive lattice. Our second main result establishes upper and lower bounds and determines the growth of this number up to an additive constant.

\begin{theorem}[Bounds for distributive generation]
 For every \(r\geq2\),
\[
\begin{aligned}
\max\left\{r+1,\left\lfloor\frac{(r-1)^2}{4}\right\rfloor\right\}
&\leq h_{\mathrm{dist}}(r)\\
&\leq
\max\left\{
r+1,\;2r-3,\;
\left\lfloor\frac{(r-1)^2}{4}\right\rfloor+1
\right\}.
\end{aligned}
\]   
\end{theorem}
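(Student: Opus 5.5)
The plan has three parts: a lower bound $r+1$, a lower bound $\lfloor (r-1)^2/4\rfloor$, and an upper bound obtained by locating the first failure of distributivity.

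\textbf{Lower bound $r+1$.} I would reuse the sharpness example for Theorem~\ref{th-intro-Helly}, modified so that the obstruction is to distributivity rather than to the valuation property. Take $r+1$ flats in general position, for instance hyperplanes of a suitable rank $r$ matroid, chosen so that:
\begin{itemize}[nosep]
\item any $r$ of them have a common adapted basis, so by Proposition~\ref{adapted-criterion} they generate a Boolean, hence distributive, lattice;
\item all $r+1$ of them generate a lattice containing a pentagon $N_5$ or a diamond $M_3$.
\end{itemize}
The natural candidate is the $r+1$ coordinate hyperplanes together with the ``diagonal'' structure of $U_{r,r+1}$, possibly passing to a truncation or a projective geometry. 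In a projective geometry, $r+1$ hyperplanes in general position pairwise generate a diamond once the rank-$2$ contractions are considered. I would check small $r$ by hand.

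\textbf{Lower bound $\lfloor (r-1)^2/4\rfloor$.} With $a+b=r-1$ and $a\approx b\approx (r-1)/2$, I would seek a family indexed by pairs $(i,j)\in[a]\times[b]$, of size $ab=\lfloor (r-1)^2/4\rfloor$. The family should be critical: removing any one member leaves a distributive generated lattice, while the full family produces a non-distributive triple only through a long chain of joins and meets that uses every member. The model is a grid, or bipartite incidence, construction in which the flat $F_{ij}$ is spanned by basis vectors $x_i$ and $y_j$ plus one extra generic element. The generic element is ``seen'' only once all the rows and columns have been joined, which requires every $F_{ij}$. Verifying distributivity after deleting one member is the delicate part. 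For this I would exhibit, for each deleted $(i,j)$, a common adapted basis of the remaining flats, which is a sufficient condition.

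\textbf{Upper bound.} Suppose the lattice $L$ generated by $\mathcal F$ is not distributive. Then $L$ contains $x,y,z$ with $x\wedge(y\vee z)\neq (x\wedge y)\vee(x\wedge z)$, and each of $x,y,z$ is a lattice term in finitely many members of $\mathcal F$. The goal is to show that some subfamily of size at most the stated maximum already generates a non-distributive lattice. I would argue as follows.
\begin{enumerate}[nosep]
\item Take a minimal non-distributive subfamily $\mathcal G$.
\item Split into cases according to whether every proper subfamily of $\mathcal G$ has a common adapted basis.
\item If it does, Theorem~\ref{th-intro-Helly} shows that $|\mathcal G|\le r+1$, or else $\mathcal G$ itself has a common adapted basis and is therefore distributive, a contradiction.
\item Otherwise some proper subfamily of $\mathcal G$ is distributive, but by Proposition~\ref{adapted-criterion} rank fails to be a valuation on the lattice it generates.
\end{enumerate}
In the last case, a rank defect $\rk F+\rk G>\rk(F\wedge G)+\rk(F\vee G)$ occurs inside a Boolean-like distributive sublattice. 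I would bound how many generators such a sublattice can have before a chain of rank defects forces a pentagon. The two resulting bounds are:
\begin{itemize}[nosep]
\item $2r-3$, from chains of length at most $r$ on the two sides;
\item $\lfloor (r-1)^2/4\rfloor+1$, from a product-of-chains embedding, since a distributive lattice of length at most $r-1$ whose join-irreducibles form a poset of width-times-height shape has at most $ab$ incomparable-pair generators.
\end{itemize}

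\textbf{Main obstacle.} I expect the hardest step to be the upper bound's counting of essential generators in the non-valuation case. There one must show that a minimal non-distributive family embeds in the poset of join-irreducibles of a distributive lattice of length at most $r-1$, via Birkhoff's representation, in such a way that each generator contributes a distinct incomparable pair across a two-part split, which yields the bound $ab$.
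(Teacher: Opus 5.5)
\textbf{The quadratic lower bound.} The certificate you plan to use cannot exist. You want to show that each one-member deletion of your $ab$-element family is distributive by exhibiting a common adapted basis for it. Suppose every one-member deletion of a family of $m\ge r+2$ flats has a common adapted basis. Then every subfamily of at most $r+1$ members lies in such a deletion, so Theorem~\ref{adapted-helly} gives a common adapted basis for the whole family, which is then distributive. Now $\lfloor (r-1)^2/4\rfloor\ge r+2$ exactly when $r\ge7$, which is precisely where this bound improves on $r+1$. So a minimal failure of that size must have a one-member deletion that is distributive \emph{without} a common adapted basis; rank then fails to be a valuation on the lattice it generates. This is the same observation that drives the paper's upper bound. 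You therefore need a distributivity certificate that does not come from a basis. The paper works in the graphic matroid $M_{p,q}$, $p+q=r-1$, with flats $F_S=\cl(R_S)$ for $S\subseteq\Xi\sqcup\Upsilon$. These satisfy $F_S\vee F_T=F_{S\cup T}$ and $\rk F_S=|S|+\mathbf 1_{S\cap\Upsilon\ne\varnothing}$, which is not a valuation. After deleting $H_{ij}$, the injective map $S\mapsto F_S$ also preserves meets on the sublattice of labels satisfying $\xi_i\in S\Rightarrow\eta_j\in S$. Its image is a distributive sublattice containing the remaining members (Proposition~\ref{graphic-obstruction}). The labels matter too: $H_{ij}$ has label $\{\xi_i\}\cup(\Upsilon\setminus\{\eta_j\})$, which makes it the unique member violating this implication. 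Your grid, with $F_{ij}$ the closure of $x_i$, $y_j$ and possibly an element common to all members, is redundant. For $k\ne j$ and $l\ne i$ one has $F_{ij}=(F_{ij}\wedge F_{ik})\vee(F_{ij}\wedge F_{lj})$ in any matroid, whereas a minimal failure must be irredundant (Lemma~\ref{minimal-failures}(b)).

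\textbf{The upper bound.} Your case split agrees with the paper's; take $\cF'$ to be a one-member deletion lacking a common adapted basis, so that $|\cF'|=m-1$. But the counting that follows is absent, and ``rank defects forcing a pentagon'' is not what bounds it. Three ingredients are needed.

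First, $\cF'$ is irredundant, being a subfamily of a minimal failure. Without irredundance there is no bound at all on the size of a distributive family.

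Second, $\langle\cF'\rangle$ has height at most $r-1$. In a distributive sublattice of height $r$, every step of a maximal chain raises rank by exactly one and the bottom has rank $0$. So rank equals $|I|$ under Birkhoff's identification, hence is a valuation, and Proposition~\ref{adapted-criterion} would give a common adapted basis (Corollary~\ref{full-height}).

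Third, an irredundant family in a finite distributive lattice of height $n\ge3$ has at most $\max\{2n-2,\lfloor n^2/4\rfloor\}$ members. The poset of join-irreducibles is arbitrary here, so there is no width-times-height shape to exploit. The paper instead views each member $S_i$ as a nonempty proper ideal and attaches a witness pair $\alpha_i\in S_i$, $\beta_i\notin S_i$ such that every other member satisfies $\alpha_i\in S\Rightarrow\beta_i\in S$. Irredundance guarantees such a pair exists (Lemma~\ref{pair-witnesses}), and the arcs $\alpha_i\to\beta_i$ form a reduced digraph on at most $n$ vertices. A reduced digraph has at most $2s-2$ arcs inside a strong component of size $s$, at most one arc between two components, and a triangle-free condensation. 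Mantel's theorem and convexity then give the bound (Lemma~\ref{arc-bound}). Members that are endpoints of the lattice are handled by induction on $n$, and $g(1)=2$, $g(2)=3$ settle small ranks. With $n=r-1$ this yields $m\le\max\{2r-3,\lfloor (r-1)^2/4\rfloor+1\}$.

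Finally, the bound $r+1$ needs no modified example. The $r+1$ singletons of $U_{r,r+1}$ are already non-distributive: with $Z$ the join of $r-1$ of them and $F,G$ the other two, $F\wedge(Z\vee G)=F\ne\varnothing=(F\wedge Z)\vee(F\wedge G)$. Also, two flats never generate a diamond, since any two lattice elements generate a distributive lattice.
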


\begin{remark}
    When $r>7$, the previous theorem says that
    \[
    \left\lfloor\frac{(r-1)^2}{4}\right\rfloor\le h_{\mathrm{dist}}(r)\le \left\lfloor\frac{(r-1)^2}{4}\right\rfloor+1.\]
    It can be proved that in fact $h_{\mathrm{dist}}(r)$ is equal to the above lower bound for $r>6$, and is equal to $r+1$ for $r\le 6$. However, for the sake of brevity, we have omitted this proof. 
    
\end{remark}
%The same bounds hold when the uniform Helly number is restricted to graphic matroids, or to matroids representable over any prescribed field.

%In particular, for \(r\geq8\) the bounds differ by one:
%\[
%\left\lfloor\frac{(r-1)^2}{4}\right\rfloor
%\leq h_{\mathrm{dist}}(r)
%\leq
%\left\lfloor\frac{(r-1)^2}{4}\right\rfloor+1.
%\]
%Thus
%\[
%h_{\mathrm{dist}}(r)=\frac{(r-1)^2}{4}+O(1),
%\]
%whereas \(h_{\mathrm{cb}}(r)=r+1\). The upper bound combines Theorem 1.1 with a bound on irredundant generators of finite distributive lattices. The lower bound comes from explicit families of flats in graphic matroids. Consequently, the different growth rates persist even for matroids representable over every field.

\medskip
\noindent\textit{Related work.}
Huang and Kaveh \cite[\S6]{HuangKaveh2024} introduced Helly numbers of
buildings, motivated by the splitting problem for toric principal
bundles, and asked for sharp bounds. 
Tsvelikhovskiy \cite{Tsvelikhovskiy2026} has
recently studied Helly numbers for Tits buildings of connected reductive groups and the
splitting of toric principal bundles. In another recent work of Tsvelikhovskiy et al \cite{Kashyap2026}, an analogue of Klyachko's theorem is proved for modules over a DVR. Equivalently, this work computes the Helly number of the Bruhat-Tits building in type A. Theorem~\ref{th-intro-Helly} can also be read in terms of
common basis complexes. Rognes introduced the common basis complex of a
vector space \cite{Rognes1992}, Brück, Piterman, and Welker treat its
analogue for matroids \cite[\S3.2]{BruckPitermanWelker2024}, and
Piterman, Shareshian, and Welker extend common basis complexes to
spherical buildings \cite{PitermanShareshianWelker2025}. These papers
are mainly concerned with the topology of these complexes and of
related posets of decompositions, and Theorem~\ref{th-intro-Helly} is a
statement of a different flavor. Take as vertices the flats of $M$ other
than $0_M$ and $E$, and as faces the finite sets of such flats with a
common adapted basis. Theorem~\ref{th-intro-Helly} says that every
minimal nonface of this complex has at most $r+1$ vertices.

\medskip
\noindent\textit{Organization.}
Section \ref{sec:basics} recalls the necessary matroid and lattice terminology. Section \ref{sec:adapted} proves the Helly theorem for adapted bases and its applications to flags and tropical toric vector bundles. Section \ref{sec:example} examines the relation between distributivity and existence of common adapted bases, and Section \ref{sec:distributive} proves the bounds for distributive generation.

\section*{AI disclosure}
AI tools have been used in major ways for both the write-up and proofs of the results in the paper. All the content has been carefully checked by the authors. In particular, the problems addressed were posed by the authors. The authors take full responsibility for the correctness of the results.

\section{Background on matroids and lattices}\label{sec:basics}

Throughout, we work with matroids of finite rank, but the ground sets are allowed to be infinite.
%Matroids may have arbitrary ground sets, but their ranks are finite.
We write $E$ for the ground set, $\rk$ for the rank function, and $\cl$ for
closure. We use the usual independent set axioms. 
For standard matroid terminology, see \cite{Oxley2011}.

\begin{definition}
For $r\ge2$, the (uniform) \emph{Helly number} $h_{\mathrm{cb}}(r)$
is the least positive integer $h$ with the
following property: for every rank $r$ matroid and every family of its flats, if each subfamily with at most $h$ members has a
common adapted basis, then so does the entire family. (Here $\mathrm{cb}$ stands for \emph{common basis}.)   
\end{definition}

The Helly number is uniform over all matroids; the least number for a fixed matroid can be smaller. The main result of the paper (Theorem \ref{adapted-helly}) shows that the uniform Helly number $h_\mathrm{cb}(r)$ is equal to $r+1$.

The lattice of flats $L(M)$ has operations
\[
 F\wedge G=F\cap G,\qquad F\vee G=\cl(F\cup G).
\]
Its bottom flat is $0_M=\cl(\varnothing)$, the set of loops; its top flat is
$E$. Empty joins mean $0_M$. We retain loops throughout. The restriction
to a flat $F$ is denoted by $M|F$; its flats are exactly the flats of $M$
contained in $F$. A basis of $F$ means a basis of $M|F$, and is \emph{adapted}
to a flat $W\subseteq F$ when its intersection with $W$ is a basis of $W$.

For a nonempty family $\cF$ of flats, $\langle\cF\rangle$ denotes the
sublattice generated by meets and joins. We put
$\langle\varnothing\rangle=\varnothing$ and regard the empty family as
distributive. We do not automatically
adjoin $0_M,E$. Note that adjoining
a bottom or top flat does not affect distributivity. It can affect
height, defined as the number of strict steps in a longest chain.
Every sublattice of $L(M)$ has height at most $\rk(M)$, since strict
inclusion of flats increases rank. A finitely generated
distributive lattice is finite: every term is a join of meets of subsets
of its generators.

\begin{definition}
Similarly, for $r\ge2$, we define the (uniform) \emph{distributive Helly number}  $h_{\mathrm{dist}}(r)$ to be the least positive integer $h$ with the
following property: for every rank $r$ matroid and every
family of its flats, if each subfamily of at most $h$ members is distributive, then so is the entire family.
\end{definition}
 
We note that if $S,T$ are subsets of a basis, then
\begin{equation}\label{coordinate-identities}
 \cl(S)\cap\cl(T)=\cl(S\cap T),\qquad
 \cl(S)\vee\cl(T)=\cl(S\cup T).
\end{equation}
%Indeed, $\rk(\cl(S)\cup\cl(T))=|S\cup T|$, so submodularity bounds
%the rank of the intersection by $|S|+|T|-|S\cup T|=|S\cap T|$.
%The intersection is a flat containing $\cl(S\cap T)$ of that same
%rank, hence equals it. The join identity follows from closure.
A basis $B$ is adapted to $F$ precisely when $F=\cl(B\cap F)$.
Also $\cl(S)\cap B=S$, so the map $S\mapsto\cl(S)$ is injective on
subsets of $B$. Thus the flats to which $B$ is adapted form a Boolean
sublattice of height $r$ with exactly $2^r$ elements. Every family with
a common adapted basis generates a distributive sublattice. In particular, these
facts apply to $M|F$ for every flat $F$. %The identities
%\eqref{coordinate-identities} extend by induction to any finite
%collection of subsets of a basis.

\section{A Helly theorem for common adapted bases}\label{sec:adapted}
In this section we prove the main result of the paper:
\begin{theorem}\label{adapted-helly}
Let $M$ be a matroid of finite rank $r$, and let $\cF$ be any family of
its flats. If every subfamily of at most $r+1$ members has a common adapted
basis, then $\cF$ has a common adapted basis. Moreover, for $r\ge2$
the number $r+1$ cannot be replaced by $r$, already for graphic
matroids; thus $h_{\mathrm{cb}}(r)=r+1$. In ranks zero and one, every
family of flats has a common adapted basis.
\end{theorem}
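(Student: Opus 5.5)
The plan is to prove the Helly statement by constructing one explicit candidate set $Q$ and then using the local hypothesis only to show that $Q$ is independent. Sharpness and the low ranks are handled separately and are easy.

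\emph{Reduction to the meet-closure.} Let $\widehat{\cF}$ be the set of all intersections of finitely many members of $\cF$, together with $E$. By \eqref{coordinate-identities}, a basis adapted to $F$ and to $G$ is adapted to $F\cap G$: writing $F=\cl(B\cap F)$ and $G=\cl(B\cap G)$ gives $F\cap G=\cl(B\cap F\cap G)$. Hence it suffices to find a basis adapted to every member of $\widehat{\cF}$. Since rank is finite, $\widehat{\cF}$ is well-founded under inclusion, with chains of length at most $r$, even if it is infinite. For $X\in\widehat{\cF}$, put
\[
X^{-}=\cl\Bigl(\bigcup\{Y\in\widehat{\cF}: Y\subsetneq X\}\Bigr),
\]
which is a flat of rank at most $\rk X$.

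\emph{Construction of $Q$.} Proceed by well-founded recursion on rank. For each $X$, choose $Q_X\subseteq X$ such that $Q_X\cup\bigcup_{Y\subsetneq X}Q_Y$ spans $X$, adding only elements outside $X^-$. Concretely, first pick a basis of the span of the lower pieces, then extend it to a basis of $X$. Set $Q=\bigcup_X Q_X$. By construction, $Q\cap X$ spans $X$ for every $X\in\widehat{\cF}$. So if $Q$ is independent, any basis of $M$ extending $Q$ is adapted to all of $\widehat{\cF}$, and in particular to $\cF$.

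\emph{Independence of $Q$ (main obstacle).} Suppose $Q$ contains a circuit $C$. Then $|C|\le r+1$, and each $c\in C$ lies in a unique chosen piece $Q_{X_c}$. Each $X_c$ is a finite meet of flats of $\cF$, and $c\notin X_c^-$. For each $c$, I would pick a flat $F_c\in\cF$ with $F_c\supseteq X_c$. I would then like to argue that the subfamily $\{F_c: c\in C\}$, which has at most $r+1$ members, has no common adapted basis, contradicting the hypothesis.

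The difficulty is that $X_c$ is generally a meet of several flats, so choosing a single $F_c$ loses information. Here I would try the following refinement. Choose $c_0\in C$ with $X_{c_0}$ of maximal rank; then $c_0\in\cl(C\setminus c_0)$. Among the flats whose meet is $X_{c_0}$, pick one flat $F$ that separates $c_0$ from the lower pieces, using $c_0\notin X_{c_0}^-$. Then run an inclusion--exclusion-type count inside a common adapted basis $B'$ of the $\le r+1$ chosen flats. Using \eqref{coordinate-identities} repeatedly, each $X_c$ and each $X_c^-$ restricted to the chosen flats becomes a coordinate flat $\cl(S_c)$ with $S_c\subseteq B'$. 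The contradiction should come from comparing $\rk\cl(C)=|C|-1$ with the number of coordinates of $B'$ forced to be distinct by the choices $c\notin X_c^-$.

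If this direct approach stalls, the fallback is to induct on $r$. One applies the theorem to $M|X$ for proper flats $X$, as the introduction suggests, so that only circuits spanning $E$ need treating. Such a circuit has exactly $r+1$ elements, which matches the bound.

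\emph{Sharpness and small ranks.} Take the cycle graph with $r+1$ edges, whose cycle matroid is $U_{r,r+1}$, and the family of its $r+1$ singleton flats. Any $r$ of the singletons form a basis adapted to all $r$ of them. A basis adapted to all $r+1$ singletons would have to contain the whole circuit, which is impossible. In rank $0$ the empty set is a basis adapted to every flat. In rank $1$ every flat is either the set of loops or $E$, and any single nonloop is a basis adapted to both.
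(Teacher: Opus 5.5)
Your reduction to the meet-closure, your recursive choice of pieces $Q_X\subseteq X\setminus X^-$ (this is the paper's $D_W$, with $T_W=X^-$), the sharpness example and the rank $0,1$ cases are all fine. The gap is the independence of $Q$, which is the whole content of the theorem. Your direct approach stops at ``the contradiction should come from'', and the fallback does not work as stated.

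\textbf{The missing step.} You must first show that $Q\cap W$ is a \emph{basis} of $W$, not merely a spanning set, for every proper $W$ in the meet-closure. Your construction only gives spanning. This is exactly where the paper uses induction on rank. For a member $F_i\in\cF$ of rank $k<r$, the flats $F_i\cap F_j$ in $M|F_i$ satisfy the local hypothesis with $k+1$. Indeed, $F_i$ together with any $k+1$ of the $F_j$ is a subfamily of size $k+2\le r+1$. Induction then gives a basis $B_i$ of $F_i$ adapted to every intersection inside $F_i$. Lemma~\ref{intersection-bases} links $B_i$ to $Q$ by induction on inclusion. Over the maximal members $U_1,\dots,U_s$ below $W$ and their meets $V_J$, one has $|Q\cap V_J|=\rk(V_J)=|B_i\cap V_J|$. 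Inclusion--exclusion then gives $|Y|=|\bigcup_j(B_i\cap U_j)|=\rk(T_W)$ for the lower part $Y$ of $Q\cap W$.

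\textbf{Why your fallback does not close the gap.} ``Applying the theorem to $M|X$'' only produces \emph{some} adapted basis of $X$. It says nothing about the set $Q\cap X$ you already built. You need a link such as the count above. Alternatively, you could strengthen the inductive statement to ``every set produced by the recursive construction is independent'', which restricts correctly to $M|F_i$. The local hypothesis also transfers only to $M|F_i$ for members $F_i\in\cF$, since $F_i$ itself must be added to the subfamily. So your reduction to ``circuits spanning $E$'' is unjustified. Nothing forces $\cl(C)$ to be such a member, and a spanning circuit of size $r+1$ gives no contradiction by itself anyway.

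\textbf{The circuit argument, once bases are in hand.} Suppose $Q\cap W$ is a basis of $W$ for every $W$ in the meet-closure. Then your worry that choosing a single $F_c\ni c$ ``loses information'' disappears, and this is exactly what Lemma~\ref{circuit-certificate} does. First discard $E$, so that $Q\subseteq\bigcup_iF_i$; otherwise elements of your $Q_E$ may lie in no member of $\cF$. For each $c$ in a circuit $C\subseteq Q$, pick one member containing it. This gives $J$ with $|J|\le|C|\le r+1$; put $U=\bigcup_{j\in J}F_j$. Inclusion--exclusion over the sets $Q\cap F_j$ gives $|Q\cap U|=\sum_{\varnothing\ne K\subseteq J}(-1)^{|K|+1}\rk(\bigcap_{k\in K}F_k)$. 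The same computation for a putative common adapted basis $B$ of $\{F_j\}_{j\in J}$ gives the same value for $|B\cap U|$. But $|B\cap U|=\rk(U)$, while $|Q\cap U|>\rk(U)$ because $Q\cap U$ spans $\cl(U)$ and contains $C$.

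\textbf{Infinite families.} The paper runs the argument only for finite families. It then notes that every finite subfamily has a common adapted basis, and a basis is adapted to exactly $2^r$ flats. Hence $\cF$ has at most $2^r$ members. Your recursion over a possibly infinite meet-closure would need such a finiteness step before any counting can be done.
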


The proof relies on the following two lemmas. 
We first treat finite families. Lemma~\ref{intersection-bases} builds
a finite set that is a basis on every intersection; Lemma~\ref{circuit-certificate}
shows that this set can be extended to a common adapted basis.

\begin{lemma}\label{intersection-bases}
Let $\cF=\{F_1,\ldots,F_m\}$ be a nonempty family of flats, and put
\begin{equation}\label{intersection-family}
 \cI=\left\{\bigcap_{i\in I}F_i:
          \varnothing\ne I\subseteq\{1,\ldots,m\}\right\}.
\end{equation}
Suppose that, for each $i$, there is a basis $B_i$ of $M|F_i$ adapted to
every $W\in\cI$ contained in $F_i$. Then there is a set
$Q\subseteq\bigcup_iF_i$ such that $Q\cap W$ is a basis of $W$ for
every $W\in\cI$.
\end{lemma}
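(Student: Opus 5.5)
The plan is to build $Q$ one member of $\cI$ at a time, moving upward through $\cI$. The family $\cI$ is finite and closed under intersection. List its members $W_1,\ldots,W_N$ in a linear extension of inclusion, so that $W_a\subsetneq W_b$ forces $a<b$. For each $W\in\cI$ we will choose a finite set $Q_W\subseteq W$ and set $Q=\bigcup_W Q_W$. We maintain two invariants. First, $Q_W$ is a basis of $W$. Second, $Q_W$ equals $D_W\cup N_W$. Here $D_W=\bigcup\{Q_{W'}:W'\in\cI,\ W'\subsetneq W\}$, and $N_W$ is a set of new elements lying in $W\setminus U_W$, where $U_W=\bigcup\{W'\in\cI:W'\subsetneq W\}$.

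First we check that these invariants give $Q\cap W=Q_W$ for every $W$. Let $x\in Q_{W''}$ and $x\in W$. Then $x$ lies in some $N_{W'}$ with $W'\subseteq W''$, and also in $W'\cap W\in\cI$. If $W'\not\subseteq W$, then $W'\cap W\subsetneq W'$, so $x\in U_{W'}$, contradicting $x\in N_{W'}$. Hence $W'\subseteq W$, and so $x\in Q_W$, either as a new element of $W$ or through $D_W$. It follows that $Q\cap W=Q_W$ is a basis of $W$, as the lemma requires.

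The inductive step has two parts. Fix $W$ and pick $i$ with $W\subseteq F_i$; this is possible since $W$ is an intersection of some of the $F_i$. Put $P=B_i\cap U_W$. The hypothesis gives that $B_i\cap W'$ is a basis of $W'$ for every $W'\in\cI$ with $W'\subseteq W$.

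\emph{Step 1: $D_W$ is independent.} The sets $Q_{W'}$ and $B_i\cap W'$ are both bases of $W'$ for every $W'\subsetneq W$ in $\cI$. Their pairwise and higher intersections are $Q\cap(W'_1\cap\cdots)$ and $B_i\cap(W'_1\cap\cdots)$. By induction and closure of $\cI$ under intersection, both are bases of the same member of $\cI$, so they have the same cardinality. Inclusion–exclusion then gives $|D_W|=|P|$. Both $D_W$ and $P$ span $\cl(U_W)$, and $P\subseteq B_i$ is independent. Therefore
\[
\rk(D_W)=\rk(U_W)=|P|=|D_W|,
\]
so $D_W$ is independent.

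\emph{Step 2: extension.} Set $N_W=(B_i\cap W)\setminus P$; this lies in $W\setminus U_W$. Since $\cl(D_W)=\cl(U_W)\supseteq P$, the set $D_W\cup N_W$ spans $\cl(B_i\cap W)=W$. Its size is at most $|D_W|+|B_i\cap W|-|P|=|B_i\cap W|=\rk W$. A spanning set of $W$ with at most $\rk W$ elements is a basis, so $D_W\cup N_W$ is a basis of $W$. This completes the induction. Finally, $Q\subseteq\bigcup_iF_i$ because each $Q_W\subseteq W\subseteq F_i$.

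The main obstacle is Step 1, the independence of the inherited part $D_W$. The idea is to compare the $Q$-pieces with the $B_i$-pieces, which form an independent model of the same intersection pattern. Inclusion–exclusion on cardinalities sees only the ranks of members of $\cI$, and closure of $\cI$ under intersection is exactly what makes this comparison legitimate.
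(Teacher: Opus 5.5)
Your proposal is correct and takes essentially the same route as the paper. In both, the new elements for each $W\in\cI$ avoid the smaller members of $\cI$, so $Q\cap W$ is the union of the pieces attached to members below $W$, and inclusion--exclusion against the single independent set $B_i$ (with $W\subseteq F_i$) shows the inherited part has exactly $\rk$ of the join of those smaller members. The paper fixes the new sets $D_W\subseteq W\setminus T_W$ in advance and runs inclusion--exclusion only over the maximal members below $W$, whereas you take the new elements to be $(B_i\cap W)\setminus U_W$ and sum over all members below $W$; these differences are cosmetic.
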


\begin{proof}
For each \(W\in\mathcal I\), let
\[
T_W=\bigvee_{\substack{U\in\mathcal I\\U\subsetneq W}}U.
\]
Choose elements \(D_W\subseteq W\setminus T_W\) that extend a basis of \(T_W\) to a basis of \(W\). Thus
\[
|D_W|=\operatorname{rk}(W)-\operatorname{rk}(T_W),
\qquad
\operatorname{cl}(T_W\cup D_W)=W.
\]
Set \(Q=\bigcup_{W\in\mathcal I}D_W\). From the construction, it follows that the $D
_W$ are pairwise disjoint: take $W, U \in \mathcal{I}$ with $W \neq U$. Without loss of generality suppose \(U\nsubseteq W\), then
\[
U\cap W\subsetneq U,
\qquad U\cap W\subseteq T_U,
\]
so \(D_U\cap W=\varnothing\) and hence $D_U \cap D_W = \varnothing$. Consequently,
\[
Q\cap W=\bigsqcup_{\substack{U\in\mathcal I\\U\subseteq W}}D_U.
\]

We now show that \(Q\cap W\) is a basis of \(W\) by induction on inclusion. For minimal \(W\), this follows directly from the choice of \(D_W\). For the induction step, let \(U_1,\ldots,U_s\) be the maximal members of \(\mathcal I\) that are strictly contained in \(W\). Then
\[
T_W=U_1\vee\cdots\vee U_s,
\qquad
Q\cap W=Y\sqcup D_W,
\quad\text{where}\quad
Y=\bigcup_{j=1}^s(Q\cap U_j).
\]
By induction, \(Q\cap U_j\) is a basis of \(U_j\), so \(Y\) already spans \(T_W\). It remains to show $
|Y|=\operatorname{rk}(T_W)$.
Choose \(i\) such that \(W\subseteq F_i\). This is where we use the assumption of existence of the basis \(B_i\).
For every nonempty \(J\subseteq\{1,\ldots,s\}\), put
\[
V_J=\bigcap_{j\in J}U_j.
\]
This intersection belongs to \(\mathcal I\), is properly contained in \(W\), and lies in \(F_i\). Therefore by induction, \(Q\cap V_J\) is a basis of \(V_J\). Also by the assumption on \(B_i\), \(B_i\cap V_J\) is a basis of \(V_J\). Hence
\[
|Q\cap V_J|=|B_i\cap V_J|=\operatorname{rk}(V_J).
\]
Now we apply the inclusion–exclusion principle to the union of finite sets \(Q\cap U_j\):
\[
\begin{aligned}
|Y|
&=\sum_{\varnothing\ne J\subseteq[s]}
(-1)^{|J|+1}|Q\cap V_J|\\
&=\sum_{\varnothing\ne J\subseteq[s]}
(-1)^{|J|+1}\operatorname{rk}(V_J)\\
&=\sum_{\varnothing\ne J\subseteq[s]}
(-1)^{|J|+1}|B_i\cap V_J|\\
&=\left|\bigcup_{j=1}^s(B_i\cap U_j)\right|.
\end{aligned}
\]

Write \(A=\bigcup_j(B_i\cap U_j)\). Because \(A\subseteq B_i\), it is independent. Because \(B_i\cap U_j\) spans \(U_j\) for every \(j\), \(A\) spans \(T_W\). Thus
\[
|Y|=|A|=\operatorname{rk}(T_W),
\]
as desired. Finally, \(Y\cup D_W=Q\cap W\) spans \(W\), and
\[
|Q\cap W|
=\operatorname{rk}(T_W)
+\bigl(\operatorname{rk}(W)-\operatorname{rk}(T_W)\bigr)
=\operatorname{rk}(W).
\]
It is therefore a basis of \(W\).

The important point is that matroid rank does not generally satisfy inclusion–exclusion. The basis \(B_i\) allows the relevant ranks to be interpreted as cardinalities of subsets of one independent set, where inclusion–exclusion does apply.
\end{proof}

\begin{lemma}\label{circuit-certificate}
Let $M$ be a rank $r$ matroid, and let $\cF$ and $\cI$ be as in
\eqref{intersection-family}. Suppose $Q\subseteq\bigcup_iF_i$ and
$Q\cap W$ is a basis of $W$ for every $W\in\cI$. If $Q$ is dependent then
there is $J\subseteq\{1,\ldots,m\}$, with $|J|\le r+1$, such that
\begin{equation}\label{rank-certificate}
 \sum_{\varnothing\ne K\subseteq J}(-1)^{|K|+1}
      \rk\!\left(\bigcap_{k\in K}F_k\right)
 >\rk\!\left(\bigcup_{j\in J}F_j\right).
\end{equation}
In particular, the flats $F_j$, $j \in J$, have no common adapted basis.
\end{lemma}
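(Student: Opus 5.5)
The plan is to take a circuit inside $Q$, attach to each of its elements one flat from $\cF$ containing it, and compare two counts of $Q\cap\bigcup_{j\in J}F_j$. One count comes from inclusion–exclusion, using that $Q$ is a basis on every member of $\cI$. The other comes from the fact that this set is dependent and spans $\bigcup_{j\in J}F_j$.

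\textbf{Step 1: choosing $J$.} Since $Q$ is dependent, it contains a circuit $C$. Every circuit of a rank $r$ matroid has at most $r+1$ elements, so $|C|\le r+1$. Because $Q\subseteq\bigcup_iF_i$, we can choose for each $c\in C$ an index $j(c)$ with $c\in F_{j(c)}$. Put $J=\{j(c):c\in C\}$, so $|J|\le|C|\le r+1$.

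\textbf{Step 2: the two counts.} Set $Q_J=Q\cap\bigcup_{j\in J}F_j=\bigcup_{j\in J}(Q\cap F_j)$.
\begin{itemize}[leftmargin=*]
\item Each $Q\cap F_j$ is a basis of $F_j\in\cI$, so it is finite, and hence $Q_J$ is finite. This is needed because $E$ may be infinite.
\item For nonempty $K\subseteq J$, the flat $\bigcap_{k\in K}F_k$ lies in $\cI$. So $Q\cap\bigcap_{k\in K}F_k$ is a basis of it, and its cardinality is $\rk(\bigcap_{k\in K}F_k)$.
\item Inclusion–exclusion for finite sets then gives
\[
|Q_J|=\sum_{\varnothing\ne K\subseteq J}(-1)^{|K|+1}\rk\!\left(\bigcap_{k\in K}F_k\right).
\]
\item Each $Q\cap F_j$ spans $F_j$, so $Q_J$ spans $\bigcup_{j\in J}F_j$. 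Hence $\rk(Q_J)=\rk(\bigcup_{j\in J}F_j)$.
\item $Q_J$ contains $C$ by the choice of $J$, so it is a dependent finite set. Therefore $|Q_J|>\rk(Q_J)$.
\end{itemize}
Combining these facts yields \eqref{rank-certificate}.

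\textbf{Step 3: the final clause.} Suppose $B$ were a basis adapted to every $F_j$ with $j\in J$. Write $S_j=B\cap F_j$. Since $B$ is adapted to $F_j$, we have $F_j=\cl(S_j)$. Iterating \eqref{coordinate-identities} gives
\[
\bigcap_{k\in K}F_k=\cl\Bigl(\bigcap_{k\in K}S_k\Bigr),
\]
and $\bigcap_{k\in K}S_k$ is independent. Hence $|B\cap\bigcap_{k\in K}F_k|=\rk(\bigcap_{k\in K}F_k)$. Inclusion–exclusion applied to the sets $S_j$ then turns the left side of \eqref{rank-certificate} into $|B\cap\bigcup_{j\in J}F_j|$. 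This set is an independent subset of $\bigcup_{j\in J}F_j$, so its size is at most $\rk(\bigcup_{j\in J}F_j)$, which contradicts \eqref{rank-certificate}.

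The only delicate point is Step 1. One must pick $J$ through the circuit so that $|J|\le r+1$, while still having all relevant intersections lie in $\cI$ so that the hypothesis on $Q$ applies. Everything after that is bookkeeping.
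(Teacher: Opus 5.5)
Your proof is correct and follows essentially the same route as the paper. A circuit $C\subseteq Q$ selects at most $|C|\le r+1$ flats. Inclusion--exclusion on the sets $Q\cap F_j$ identifies $|Q\cap\bigcup_{j\in J}F_j|$ with the alternating rank sum, and dependence of this spanning set gives the strict inequality. The same inclusion--exclusion applied to the sets $B\cap F_j$ then gives the contradiction.

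The differences are cosmetic. In the last step you only need $|B\cap\bigcup_{j\in J}F_j|\le\rk(\bigcup_{j\in J}F_j)$, where the paper asserts equality. You also justify adaptedness to the intersections via the iterated identity \eqref{coordinate-identities}, which the paper leaves implicit.
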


\begin{proof}
Choose a circuit $C\subseteq Q$. Pick $F_j$, $j \in J$ such that $C \subset \bigcup_{j \in J} F_j$ and $|J|\le|C|\le r+1$. Put $U=\bigcup_{j\in J} F_j$. Note that $U$ may not be a flat itself. The finite set $Q\cap U$
spans $U$ and contains $C$, so $|Q\cap U|>\rk(U) := \rk(\cl(U))$.
The inclusion-exclusion applied to the sets $Q \cap F_j$, $j \in J$, shows that $|Q\cap U|$ coincides with the left-hand side of \eqref{rank-certificate} which proves the first claim. To prove the second claim, suppose a basis $B$ is adapted to the flats $F_j$, $j \in J$. Then, for any subset $K \subset J$, $B$ is adapted to $\bigcap_{k \in K} F_k$. Using
inclusion-exclusion for the subsets $B \cap F_j$, $j \in J$, we get $|B\cap U|=\rk(U)$ which contradicts \eqref{rank-certificate}.
\end{proof}

\begin{proof}[Proof of Theorem~\ref{adapted-helly}]
First suppose the family is finite. We prove the claim by induction on the rank $r$.
The cases $r=0,1$ are immediate.
Discard the bottom flat $0_M$ and the top flat $E$, to which every basis
is adapted. Write the remaining family as $\{F_1,\ldots,F_m\}$, and let $\cI$ be
as in \eqref{intersection-family}.

Fix $i$ and put $k=\rk(F_i)$, so $1\le k < r$. In the restriction $M|F_i$, consider
the flats $F_i\cap F_j$. Since $k+2 \leq r+1$, any $k+1$ of the $F_j$ together with $F_i$ have a common adapted basis. This implies that any $k+1$ of the $F_i \cap F_j$ have a common adapted basis. The induction hypothesis then gives a basis
$B_i$ of $F_i$ adapted to all the $F_i\cap F_j$, and hence to all their
intersections. In fact, if $W=\bigcap_{j\in J}F_j\subseteq F_i$,
then $W=\bigcap_{j\in J}(F_i\cap F_j)$, so $B_i$ is adapted to every
member of $\cI$ contained in $F_i$.

Now by Lemma~\ref{intersection-bases}, we have a set $Q\subseteq\bigcup_iF_i$
meeting each member of $\cI$ in a basis.
Lemma~\ref{circuit-certificate} and the hypothesis imply that $Q$ is independent.
Extend the set $Q$ to a basis $B$ of $M$. Since $Q\cap F_i$ already spans $F_i$,
the basis $B$ adds no element of $F_i$. Thus
$B\cap F_i=Q\cap F_i$, so $B$ is adapted to every flat $F_i$.

Now suppose $\cF$ is possibly infinite and satisfies the local hypothesis, that is, any $r+1$ subfamily has an adapted basis. By what we just proved, every
finite subfamily has a common adapted basis.
If $\cF$ contained $2^r+1$ distinct flats, these could not all admit
one adapted basis, since a basis, which has $r$ elements, is adapted to exactly $2^r$
flats. Hence $\cF$ itself has at most $2^r$ distinct members, and the
finite-family result applies.

To prove that the $r+1$ bound is sharp, take the uniform matroid $U_{r,r+1}$, whose independent
sets are the subsets of size at most $r$. Its $r+1$ singleton flats
have no common adapted basis, while any $r$ do. Note that $U_{r,r+1}$ is the graphic matroid of a cycle with $r+1$ edges.
\end{proof}

For any field $K$, take the rank $r$ matroid consisting of all vectors in $K^r$,
including the zero vector as a loop. Its flats are exactly the linear
subspaces. Theorem~\ref{adapted-helly} therefore recovers Klyachko's
theorem over every field.

A \emph{flag} is a chain of flats; a basis is adapted to a collection of
flags if it is adapted to every flat occurring in them. The following is an immediate corollary of Theorem \ref{adapted-helly}.

\begin{corollary}\label{flags-helly}
A collection of flags in a matroid of rank $r$ has a common adapted
basis if every subcollection of at most $r+1$ flags does.
\end{corollary}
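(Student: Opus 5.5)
The plan is to reduce Corollary~\ref{flags-helly} directly to Theorem~\ref{adapted-helly} by passing from flags to the flats occurring in them. Let $\{\Phi_\alpha\}_{\alpha\in A}$ be the collection of flags, and let $\cF=\bigcup_\alpha \Phi_\alpha$ be the family of all flats appearing in some flag. By definition, a basis is adapted to the collection of flags exactly when it is adapted to every member of $\cF$. So it suffices to verify the local hypothesis of Theorem~\ref{adapted-helly} for $\cF$: every subfamily of at most $r+1$ flats of $\cF$ has a common adapted basis.

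Given flats $G_1,\ldots,G_s\in\cF$ with $s\le r+1$, choose for each $G_t$ a flag $\Phi_{\alpha_t}$ containing it. The subcollection $\{\Phi_{\alpha_1},\ldots,\Phi_{\alpha_s}\}$ has at most $r+1$ distinct flags. By hypothesis it has a common adapted basis $B$. Since $B$ is adapted to every flat of these flags, it is in particular adapted to $G_1,\ldots,G_s$.

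Theorem~\ref{adapted-helly} then gives a basis adapted to all of $\cF$, hence to every flag. The theorem allows infinite families, so neither the collection of flags nor $\cF$ needs to be finite. The only step needing any care is the bookkeeping that repeated choices $\alpha_t=\alpha_{t'}$ merely shrink the subcollection, and this is harmless. I expect no genuine obstacle; one could also remark that $\cF$ is automatically finite, with at most $2^r$ members, by the argument in the proof of the theorem, but this is not needed.
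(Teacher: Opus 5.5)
Your proposal is correct and matches the paper's argument. The paper treats the corollary as immediate: any at most $r+1$ flats occurring in the collection lie in at most $r+1$ flags, so Theorem~\ref{adapted-helly} applies to the family of all occurring flats.
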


%\begin{proof}
%Any at most $r+1$ flats occurring in the collection belong to at most
%$r+1$ flags. Apply Theorem~\ref{adapted-helly} to all the occurring flats. One-element
%flags from its sharpness example show that $r+1$ is sharp for $r\ge2$.
%\end{proof}

For a finite loopless matroid $M$ on $E$, the support of its
\emph{Bergman fan} is
\[
 \operatorname{Berg}(M)=\{w\in\mathbb R^E:
       \min_{e\in D}w_e\text{ is attained at least twice
       for every circuit }D\}.
\]
Its fine subdivision has cones indexed by flags of flats
\cite{ArdilaKlivans2006,FeichtnerSturmfels2005}.
For a basis $B$, its \emph{apartment} is
\[
 A_B=\{w\in\operatorname{Berg}(M):
          w(B)\ge w(B')\text{ for every basis }B'\},
 \qquad w(S)=\sum_{e\in S}w_e.
\]
A point $w$ lies in $A_B$ exactly when $B$ is adapted to every flat in
its flag of sets $\{e:w_e\ge t\}$
\cite[Definition~3.6 and Proposition~3.7]{KavehManon2024}.
Thus Corollary~\ref{flags-helly} gives the following.

\begin{corollary}\label{apartments}
A collection of points in the Bergman fan of a finite loopless
rank $r$ matroid lies in one apartment if every subcollection of at most
$r+1$ points does. For $r\ge2$, this is the optimal uniform bound over
rank $r$ matroids.
\end{corollary}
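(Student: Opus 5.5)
The plan is to reduce Corollary~\ref{apartments} to Corollary~\ref{flags-helly} through the quoted characterization of apartments: $w\in A_B$ if and only if $B$ is adapted to every flat in the flag $\mathcal{F}(w)$ of sets $\{e: w_e\ge t\}$, $t\in\mathbb R$. I will take \cite[Proposition~3.7]{KavehManon2024} as given, including the fact that for $w\in\operatorname{Berg}(M)$ these sets are flats.

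Given a collection $P$ of points of $\operatorname{Berg}(M)$, attach to each $w\in P$ its flag $\mathcal{F}(w)$. A subcollection $P'\subseteq P$ lies in one apartment $A_B$ exactly when $B$ is a common adapted basis of the flags $\mathcal{F}(w)$, $w\in P'$. So the hypothesis says that every at most $r+1$ of these flags have a common adapted basis. Corollary~\ref{flags-helly} then gives a basis $B$ adapted to all of them, and the characterization gives $P\subseteq A_B$.

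Corollary~\ref{flags-helly} places no finiteness restriction on the collection of flags: it rests on Theorem~\ref{adapted-helly}, which allows infinite families. Since $E$ is finite there are only finitely many flats anyway, so infiniteness of $P$ causes no trouble.

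For optimality with $r\ge2$, I use the sharpness example. Take $M=U_{r,r+1}$ on $E=\{e_0,\dots,e_r\}$, which is loopless. For each $i$ set $w^{(i)}=\mathbf 1_{\{e_i\}}$. Its level sets are $\varnothing$, $\{e_i\}$ and $E$, all of them flats. Every circuit of $M$ is all of $E$, and the minimum value $0$ is attained $r\ge2$ times on it, so $w^{(i)}\in\operatorname{Berg}(M)$. A basis lies in $A_B$ for $w^{(i)}$ exactly when it is adapted to $\{e_i\}$, that is, when it contains $e_i$. Any $r$ of these points lie in a common apartment, namely the one for the basis formed by the corresponding $r$ elements. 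All $r+1$ do not, since no basis contains all of $E$. Hence $r$ points do not suffice as a uniform bound.

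The main obstacle is making sure the characterization applies verbatim, in particular that the level sets are flats for points of the Bergman fan. This is standard and is cited, and in the example it can be checked directly.
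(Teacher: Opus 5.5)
Your proposal is correct and matches the paper's proof. You reduce to Corollary~\ref{flags-helly} through the level-set flags $\{e: w_e\ge t\}$ and the apartment characterization, and use the indicator vectors $\mathbf 1_{\{e_i\}}$ in $U_{r,r+1}$ for sharpness, also checking Bergman-fan membership and flatness of the level sets, which the paper leaves implicit. One trivial wording slip: ``a basis lies in $A_B$ for $w^{(i)}$'' should read ``$w^{(i)}$ lies in $A_B$ exactly when $B$ is adapted to $\{e_i\}$.''
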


\begin{proof}
Apply Corollary~\ref{flags-helly} to the flags $\{e:w_e\ge t\}$, for $w$ in the given collection of points in the Bergman fan.
For sharpness, in $U_{r,r+1}$ take the singleton indicator vectors
$w_i=\mathbf1_{\{i\}}$. They belong to the Bergman fan, and
$w_i\in A_B$ exactly when $i\in B$. Any $r$ points therefore lie in an
apartment, whereas all $r+1$ do not.
\end{proof}

We next deduce a splitting result for tropical toric vector
bundles. This is a tropical extension of Klyachko's result on splitting of toric vector bundles over projective space \cite{Klyachko1990}. A tropical toric vector bundle is described by a map
$\Phi:|\Sigma|\to\operatorname{Berg}(M)$ that, on each cone of
$\Sigma$, is integral and linear in the coordinates of some apartment
\cite[Definition~4.1]{KavehManon2024}.
It is called \emph{split} when its entire image lies in one apartment
\cite[Definition~9.1]{KavehManon2024}. %In this case its extension class
%contains a direct sum of tropical line bundles
%\cite[Proposition~9.2]{KavehManon2024}.

\begin{corollary}\label{tropical-splitting}
A tropical toric vector bundle of rank $r<n$ on the fan of
$\mathbb P^n$ is split.
\end{corollary}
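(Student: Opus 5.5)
The plan is to reduce the statement to Corollary~\ref{apartments}, using the combinatorics of the fan $\Sigma$ of $\mathbb P^n$. This fan has $n+1$ rays $\rho_0,\ldots,\rho_n$, with primitive generators $u_0,\ldots,u_n$ satisfying $u_0+\cdots+u_n=0$. Every proper subset of the rays spans a cone of $\Sigma$, so any $n$ rays lie in a common cone. Let $\Phi:|\Sigma|\to\operatorname{Berg}(M)$ be the piecewise linear map describing the bundle, where $M$ has rank $r<n$. Put $w_i=\Phi(u_i)$.

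\textbf{Step 1: the ray images lie in one apartment.} Take any subcollection of at most $r+1\le n$ of the points $w_i$. The corresponding rays lie in a single cone $\sigma$ of $\Sigma$. By definition $\Phi(\sigma)$ lies in some apartment $A_B$, so these $w_i$ do too. Corollary~\ref{apartments} then gives a basis $B$ with $w_0,\ldots,w_n\in A_B$. If $M$ has loops, or the formalism of \cite{KavehManon2024} uses a non-simple matroid, I would first pass to the simplification, or apply Corollary~\ref{flags-helly} directly to the flags $\{e:(w_i)_e\ge t\}$.

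\textbf{Step 2: every point of the image lies in $A_B$.} Let $\sigma$ be a cone of $\Sigma$ and $x=\sum_{i\in\sigma} a_iu_i$ with $a_i\ge0$. On $\sigma$ the map $\Phi$ is linear in the coordinates of some apartment $A_{B_\sigma}$, so $\Phi(x)=\sum a_i w_i$ as vectors in $\mathbb R^E$. This uses that the apartment coordinates are the restrictions of the ambient coordinates; I would check this against \cite[Definition~4.1]{KavehManon2024}. I then need $\sum a_iw_i\in A_B$.

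Here I expect the main obstacle. $A_B$ is a union of cones of the Bergman fan, indexed by flags adapted to $B$, and it is not obviously convex. However, the $w_i$ ($i\in\sigma$) already lie in the common cone $\Phi(\sigma)\subseteq A_{B_\sigma}$, and their nonnegative combination lies in a single cone of the fine subdivision. So the flag of $\Phi(x)$ is the common refinement of the flags of the $w_i$, $i\in\sigma$.

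More precisely, the sets $\{e:\Phi(x)_e\ge t\}$ are of the form $\{e: \sum a_i (w_i)_e\ge t\}$. Because the $w_i$ lie in one cone, they are simultaneously compatible with one maximal flag. Each such level set is then a union of members of that flag, and in fact a member of the lattice generated by the flats in the flags of the $w_i$. Each of those flats is adapted to $B$. Since the flats adapted to $B$ form a Boolean sublattice, closed under $\cap$ and $\vee$, the flat $\{e:\Phi(x)_e\ge t\}$ is adapted to $B$. Hence $\Phi(x)\in A_B$ by \cite[Proposition~3.7]{KavehManon2024}.

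If the level sets are only unions rather than lattice expressions, I would instead argue that the level set is itself a flat. It lies in the chain refining the $w_i$-flags, and that chain is built from intersections and joins of flats that are adapted to $B_\sigma$. One then checks, via \eqref{coordinate-identities}, that each of these is also adapted to $B$.

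\textbf{Conclusion and sharpness.} Steps 1 and 2 show that the whole image lies in $A_B$, so the bundle is split. For optimality when $r=n$, I would take $U_{n,n+1}$ and send $u_i$ to $\mathbf 1_{\{i\}}$. The map is linear on each cone, since any $n$ of the indicators lie in an apartment and its cones. Its image, however, lies in no single apartment, as in the sharpness part of Corollary~\ref{apartments}.
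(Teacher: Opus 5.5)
Your Step~1 is exactly the paper's argument: any $r+1\le n$ rays span a cone, and then Corollary~\ref{apartments} applies. The gap is in Step~2. The paper settles it in one line by appeal to piecewise linearity, and you fill it in with two false claims.

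First, ``linear in the coordinates of $A_{B_\sigma}$'' only says that the entries $\Phi_c$, $c\in B_\sigma$, are linear on $\sigma$. For $e\notin B_\sigma$ one has $\Phi_e=\min\{\Phi_c: c\in D_e\setminus\{e\}\}$, where $D_e$ is the fundamental circuit of $e$ with respect to $B_\sigma$ \cite[Proposition~3.7]{KavehManon2024}. This entry is only concave, so $\Phi(x)=\sum_i a_iw_i$ in $\mathbb R^E$ is false.

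Second, the $w_i$ for the rays of $\sigma$ need not lie in one cone of the fine subdivision, so there is no common refining flag. Both claims fail for $M=U_{2,3}$ on $\{1,2,3\}$ with a two-dimensional cone $\sigma$ and $\Phi(a_1u_1+a_2u_2)=(a_1,a_2,\min(a_1,a_2))$. This map is linear in the coordinates of $A_{\{1,2\}}$, and it occurs inside split bundles on $\mathbb P^3$. Here:
\begin{itemize}
\item the flags of $w_1=(1,0,0)$ and $w_2=(0,1,0)$ are $\{1\}\subset E$ and $\{2\}\subset E$, whose members are incomparable;
\item $\Phi(u_1+u_2)=(1,1,1)$, whereas $w_1+w_2=(1,1,0)$ is not even in $\operatorname{Berg}(U_{2,3})$.
\end{itemize}
Your fallback relies on the same nonexistent refining chain. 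The same misconception affects your sharpness sketch. The indicators $\mathbf 1_{\{i\}}$ cannot be extended linearly in $\mathbb R^E$, since the result leaves the Bergman fan. On the cone omitting the $j$th ray one must put $\Phi_j=\min_{i\ne j}\lambda_i$, as in the paper's remark.

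A correct way to finish, which is what the appeal to piecewise linearity amounts to, is by concavity. Each $\Phi_e$ is a minimum of linear functions on $\sigma$, hence concave and positively homogeneous. So for $x=\sum_ia_iu_i\in\sigma$,
\[
 \Phi(x)(B)\ \ge\ \sum_i a_i\,w_i(B)\ =\ \sum_i a_i\,w_i(B_\sigma)\ =\ \Phi(x)(B_\sigma).
\]
The middle equality holds because $B$ and $B_\sigma$ are both maximum-weight bases at every $w_i$. The last holds because the $B_\sigma$-entries are linear on $\sigma$. Since $B_\sigma$ has maximum weight at $\Phi(x)$, so does $B$; that is, $\Phi(x)\in A_B$.

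Alternatively, your lattice idea survives if you compute in $B_\sigma$-coordinates. Put $y_i=w_i|_{B_\sigma}$. The min formula gives $\{e:\Phi(x)_e\ge t\}=\cl(U)$ with $U=\{c\in B_\sigma:\sum_ia_i(y_i)_c\ge t\}$. As $a_i\ge0$, $U$ is an up-set for coordinatewise dominance, so $U=\bigcup_{c\in U}\bigcap_i\{c':(y_i)_{c'}\ge(y_i)_c\}$. By \eqref{coordinate-identities}, $\cl(U)$ is then a join of meets of the level flats $\{e:(w_i)_e\ge s\}=\cl\{c':(y_i)_{c'}\ge s\}$. These are adapted to $B$ by Step~1, and the flats adapted to $B$ are closed under $\cap$ and $\vee$.
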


\begin{proof}
Any at most $r+1\le n$ rays belong to one cone. So their images lie in one apartment. Corollary~\ref{apartments} then implies that all the images of rays lie in one apartment $A_B$, for some basis $B$. It remains to show that the whole image of $\Phi$ lands in $A_B$. But this is a consequence of the piecewise linearity of $\Phi$.
%We must check that the whole image lies there.
%Fix a cone $\sigma$ and a local apartment basis $C$, and write
%$w(x)=\Phi(x)$. The coordinates $w_c$, %$c\in C$, are linear on
%$\sigma$; every other $w_e$ is a minimum of some of these linear
%coordinates \cite[Proposition~3.7]{KavehManon2024}. Thus every $w_e$
%is concave and positively homogeneous.
%Writing $x=\sum_\rho\lambda_\rho v_\rho$ with $\lambda_\rho\ge0$
%over its primitive ray generators $v_\rho$ gives
%\[
% w(x)(B)\ \ge\ \sum_\rho\lambda_\rho w(v_\rho)(B)
%   =\sum_\rho\lambda_\rho w(v_\rho)(C)=w(x)(C).
%\]
%The equality uses that both $B$ and $C$ maximize weight at each ray
%image. Since $C$ maximizes weight throughout $\sigma$, the reverse
%inequality holds. Thus $w(x)\in A_B$ for every $x\in\sigma$, as required.
\end{proof}

\begin{remark}
The condition $r<n$ is optimal for $n\ge2$. Let the underlying matroid
be $U_{n,n+1}$ on the ground set $\{0,\ldots,n\}$ and let $v_0,\ldots,v_n$ be the
rays of the fan of $\mathbb P^n$. On the maximal cone omitting $v_j$,
define, for $x=\sum_{i\ne j}\lambda_i v_i$ with $\lambda_i\ge0$,
\[
 \Phi_i(x)=\lambda_i\ (i\ne j),\qquad
 \Phi_j(x)=\min_{i\ne j}\lambda_i.
\]
These formulas agree on overlaps and are integral and linear in the
apartment coordinates of $A_{\{0,\ldots,n\}\setminus\{j\}}$.
They define a rank $n$ bundle whose images of rays are standard basis vectors. But the standard basis vectors do not lie in one apartment, which proves that it is not split.
\end{remark}

\section{Distributivity and common adapted bases}\label{sec:example}
A common adapted basis implies distributivity of the generated lattice
(Section~\ref{sec:basics}). The converse requires one additional
condition: rank must be a valuation. We prove this criterion in
Proposition~\ref{adapted-criterion} and deduce the full-height case
needed in Section~\ref{sec:distributive}.

\subsection{Adapted bases and valuations}

A real-valued function $v$ on a lattice $L$ is a \emph{valuation} if
\begin{equation}\label{valuation-identity}
 v(X)+v(Y)=v(X\wedge Y)+v(X\vee Y)\qquad\text{for all }X,Y\in L.
\end{equation}
This identity is automatic for comparable pairs. Cardinality is a
valuation on a lattice of finite sets under union and intersection,
and dimension is a valuation on the lattice of vector subspaces.
Matroid rank satisfies only
\[
 \rk(X)+\rk(Y)\ge\rk(X\wedge Y)+\rk(X\vee Y),
\]
both for sets, with join given by union, and for flats, with join
given by closure of the union. Thus rank is a valuation precisely
when equality holds for every pair in the lattice.

\subsection{Birkhoff's theorem}

For a finite poset $\Pi$, let $J(\Pi)$ be its lattice of order ideals:
subsets $I\subseteq\Pi$ such that $p\in I$ and $q\le p$ imply
$q\in I$. Its meet and join are intersection and union. For $p\in\Pi$,
write
\[
 I_p=\{q\in\Pi:q\le p\},\qquad I_p^-=I_p\setminus\{p\}.
\]
An element of a finite lattice is \emph{join-irreducible} if it is
not the least element and is not the join of two strictly smaller
elements. In $J(\Pi)$ these are exactly the principal ideals $I_p$.

\begin{theorem}[Birkhoff]\label{birkhoff}
Let $D$ be a finite distributive lattice, and let $\Pi$ be the set of
join-irreducible elements of $D$, ordered as in $D$. Then
\[
 D\longrightarrow J(\Pi),\qquad x\longmapsto\{p\in\Pi:p\le x\},
\]
is an isomorphism of lattices. Its inverse sends an ideal $I$ to the
join $\bigvee I$, the join of the empty ideal being the least element
of $D$.
\end{theorem}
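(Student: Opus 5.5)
We prove Birkhoff's theorem in the standard way. Write $\phi(x)=\{p\in\Pi:p\le x\}$ and $\psi(I)=\bigvee I$.

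\textbf{Step 1: $\phi$ lands in $J(\Pi)$.} If $p\le x$ and $q\le p$ with $q\in\Pi$, then $q\le x$. So $\phi(x)$ is an order ideal.

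\textbf{Step 2: $\psi\circ\phi=\mathrm{id}_D$.} This says that every element is the join of the join-irreducibles below it. We argue by induction along the finite order of $D$.
\begin{itemize}
\item The least element is the empty join.
\item If $x$ is join-irreducible, then $x\in\phi(x)$ is the largest member, so $\bigvee\phi(x)=x$.
\item Otherwise $x=y\vee z$ with $y,z<x$. By induction $y=\bigvee\phi(y)$ and $z=\bigvee\phi(z)$. Since $\phi(y),\phi(z)\subseteq\phi(x)$, we get $x\le\bigvee\phi(x)\le x$.
\end{itemize}
This step uses only finiteness, not distributivity.

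\textbf{Step 3: $\phi\circ\psi=\mathrm{id}_{J(\Pi)}$.} Let $I$ be an ideal. Clearly $I\subseteq\phi(\bigvee I)$. For the reverse, let $p\in\Pi$ with $p\le\bigvee I$. Distributivity gives
\[
p=p\wedge\bigvee I=\bigvee_{q\in I}(p\wedge q).
\]
Now $p$ is join-irreducible, and by induction on the number of joinands it is then not a join of finitely many strictly smaller elements. It is also not the least element, so the empty join is excluded. Hence $p=p\wedge q$ for some $q\in I$, i.e.\ $p\le q$. Since $I$ is an ideal, $p\in I$.

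This is the one place distributivity enters, and it is the main point of the proof. Steps 2 and 3 together show that $\phi$ is a bijection with inverse $\psi$.

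\textbf{Step 4: $\phi$ is a lattice isomorphism.} For meets, $p\le x\wedge y$ holds iff $p\le x$ and $p\le y$, so $\phi(x\wedge y)=\phi(x)\cap\phi(y)$. For joins, both $\phi$ and $\psi$ are order-preserving, and an order-preserving bijection with order-preserving inverse preserves all joins. Explicitly,
\[
\phi(x)\cup\phi(y)\mapsto\bigvee\bigl(\phi(x)\cup\phi(y)\bigr)=x\vee y
\]
under $\psi$, using Step 2. Applying $\phi$ gives $\phi(x\vee y)=\phi(x)\cup\phi(y)$. The join of the empty ideal is the least element, which matches $\phi$ of the bottom being $\varnothing$.
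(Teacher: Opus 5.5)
Your proof is correct and is the standard argument. The paper does not prove Birkhoff's theorem itself but cites Davey--Priestley, Chapter~5, whose proof likewise combines two facts: every element of a finite lattice is the join of the join-irreducibles below it (your Step~2), and join-irreducibles are join-prime in a distributive lattice (your Step~3). The only cosmetic difference is that you obtain $\phi(x\vee y)=\phi(x)\cup\phi(y)$ from the bijection, whereas the usual argument derives it directly from join-primeness.
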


See \cite[Chapter~5]{DaveyPriestley2002}. Every maximal chain in
$J(\Pi)$ has the form
\begin{equation}\label{maximal-chains}
 \varnothing=K_0\subsetneq K_1\subsetneq\cdots\subsetneq K_n=\Pi,
 \qquad K_k=K_{k-1}\cup\{p_k\},
\end{equation}
where $p_1,\ldots,p_n$ lists the elements of $\Pi$ in an order
compatible with the partial order. Indeed, between two ideals one
can add a minimal element of their difference. Hence $D$ has height
$|\Pi|$, and every ideal $I$ occurs at position $|I|$ on some maximal
chain.

\begin{example}\label{birkhoff-example}
Let $\Pi=\{a,b,c\}$ with $a<c$ and $b<c$, and $a,b$ incomparable.
Then
\[
 J(\Pi)=\{\varnothing,\{a\},\{b\},\{a,b\},\Pi\}.
\]
Its join-irreducibles are $\{a\},\{b\},\Pi$, and its two maximal
chains correspond to the orders $a,b,c$ and $b,a,c$. Both have
length $3$.
\end{example}

\begin{lemma}\label{increment-lemma}
Let $v$ be a valuation on $J(\Pi)$, and put
\[
 \delta_p(v)=v(I_p)-v(I_p^-)\qquad(p\in\Pi).
\]
\begin{enumerate}[label=\textup{(\alph*)},leftmargin=*,itemsep=1pt]
\item If $I$ and $I\cup\{p\}$ are ideals and $p\notin I$, then
 $v(I\cup\{p\})=v(I)+\delta_p(v)$.
\item For every ideal $I$,
 \[
  v(I)=v(\varnothing)+\sum_{p\in I}\delta_p(v).
 \]
\item Two valuations on $J(\Pi)$ that agree on one maximal chain agree
 everywhere.
\end{enumerate}
Conversely, for any real numbers $c$ and $c_p$ $(p\in\Pi)$, the
function $I\mapsto c+\sum_{p\in I}c_p$ is a valuation on $J(\Pi)$.
\end{lemma}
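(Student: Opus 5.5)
The plan is to prove (a) first, derive (b) from (a) along a maximal chain, deduce (c) from (b), and finish with the converse as a cardinality computation.

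For (a), suppose $I$ and $I\cup\{p\}$ are ideals with $p\notin I$. Since $I\cup\{p\}$ is an ideal, every $q<p$ lies in $I$, so $I_p^-\subseteq I$. I would apply the valuation identity \eqref{valuation-identity} to $X=I$ and $Y=I_p$ in $J(\Pi)$, where meet is intersection and join is union. We have $I\cap I_p=I_p^-$, because $I_p^-\subseteq I$ and $p\notin I$. We also have $I\cup I_p=I\cup\{p\}$. The identity then reads
\[
v(I)+v(I_p)=v(I_p^-)+v(I\cup\{p\}),
\]
which rearranges to $v(I\cup\{p\})=v(I)+\delta_p(v)$. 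This intersection computation is the only real content of the lemma, and I expect the one point needing care to be verifying that $I\cap I_p$ is exactly $I_p^-$.

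For (b), I would use the description \eqref{maximal-chains}: the ideal $I$ sits on a maximal chain $\varnothing=K_0\subsetneq\cdots\subsetneq K_n$ with $K_{|I|}=I$. Here each $K_k=K_{k-1}\cup\{p_k\}$, with $p_k\notin K_{k-1}$ and both sets ideals. Applying (a) $|I|$ times along this chain gives
\[
v(I)=v(\varnothing)+\sum_{k\le |I|}\delta_{p_k}(v),
\]
and the elements $p_1,\ldots,p_{|I|}$ are exactly the elements of $I$.

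For (c), let two valuations $v,w$ agree on one maximal chain $K_0\subsetneq\cdots\subsetneq K_n$. Then they agree on $K_0=\varnothing$. They also have equal successive differences $v(K_k)-v(K_{k-1})=w(K_k)-w(K_{k-1})$, and by (a) these differences are $\delta_{p_k}(v)$ and $\delta_{p_k}(w)$. Since the chain is maximal, every element of $\Pi$ occurs as some $p_k$, so $\delta_p(v)=\delta_p(w)$ for all $p\in\Pi$. Part (b) then shows that $v$ and $w$ agree on every ideal.

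For the converse, set $f(I)=c+\sum_{p\in I}c_p$. Additivity of the weighted count $\sum_{p\in S}c_p$ over finite sets gives
\[
\sum_{I}c_p+\sum_{I'}c_p=\sum_{I\cap I'}c_p+\sum_{I\cup I'}c_p,
\]
where each sum runs over $p$ in the indicated set. Adding $2c$ to both sides gives \eqref{valuation-identity}, since meet and join in $J(\Pi)$ are intersection and union.
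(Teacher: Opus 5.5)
Your proposal is correct and follows the paper's proof essentially step for step. Part (a) applies the valuation identity to $I$ and $I_p$ using $I\cap I_p=I_p^-$ and $I\cup I_p=I\cup\{p\}$; (b) iterates (a) along a chain adding the elements of $I$ in a compatible order; (c) reads off $v(\varnothing)$ and every increment $\delta_p$ from one maximal chain; and the converse is inclusion--exclusion for weighted counts.
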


\begin{proof}
In (a), $I_p^-\subseteq I$ and $p\notin I$, so
\[
 I\cap I_p=I_p^-,\qquad I\cup I_p=I\cup\{p\}.
\]
Applying \eqref{valuation-identity} to $I,I_p$ gives (a). Adding the
elements of $I$ in an order compatible with the partial order gives
(b). Along a maximal chain \eqref{maximal-chains}, the successive
differences are the increments $\delta_{p_k}(v)$, each occurring
once. Thus the chain determines $v(\varnothing)$ and all increments,
which proves (c). The converse follows from inclusion--exclusion.
\end{proof}

\subsection{The criterion}

\begin{proposition}\label{adapted-criterion}
Let $M$ be a finite-rank matroid and $\cF$ a finite nonempty family of
its flats. Let $D=\langle\cF\rangle$, and let $\mathcal R$ be the
lattice of subsets generated by $\cF$ under set-theoretic union and
intersection. The following conditions are equivalent:
\begin{enumerate}[label=\textup{(\roman*)},leftmargin=*,itemsep=1pt]
\item $\cF$ has a common adapted basis.
\item $D$ is distributive and rank is a valuation on $D$.
\item Rank is a valuation on $\mathcal R$.
\end{enumerate}
\end{proposition}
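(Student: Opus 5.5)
I will prove (i)$\Rightarrow$(iii)$\Rightarrow$(ii)$\Rightarrow$(i).

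\emph{(i)$\Rightarrow$(iii).} Let $B$ be a common adapted basis. The sets $X$ for which $B\cap X$ spans $X$ and $\rk(X)=|B\cap X|$ include every $F\in\cF$. This class is closed under intersection and union: this is the content of \eqref{coordinate-identities} together with the rank estimates used in Lemma~\ref{circuit-certificate}, applied first to pairs and then inductively to every element of $\mathcal R$. On such sets $\rk(X)=|B\cap X|$, and cardinality is a valuation under union and intersection. Hence rank is a valuation on $\mathcal R$.

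\emph{(iii)$\Rightarrow$(ii).} The map $\cl\colon\mathcal R\to L(M)$ will be shown to be a lattice homomorphism onto $D$. It clearly preserves joins, since $\cl(X\cup Y)=\cl X\vee\cl Y$. For meets, I plan to prove by induction on the term length that $\cl(X\cap Y)=\cl X\cap\cl Y$ for $X,Y\in\mathcal R$. The inclusion $\subseteq$ is clear. For equality of ranks, submodularity of flats gives
\[
\rk(\cl X\cap\cl Y)\le\rk X+\rk Y-\rk(X\cup Y)=\rk(X\cap Y),
\]
using that rank is a valuation on $\mathcal R$. A flat containing $\cl(X\cap Y)$ of no larger rank equals it. Then $D$ is a homomorphic image of the distributive lattice $\mathcal R$, so it is distributive, and rank on $D$ is a valuation because $\rk\cl X=\rk X$. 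The main subtlety is that the induction really only needs the identity $\cl(X\cap Y)=\cl X\cap\cl Y$ for arbitrary $X,Y\in\mathcal R$. That identity holds directly by the rank argument just given, so no induction is needed.

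\emph{(ii)$\Rightarrow$(i).} This is the main step. Use Birkhoff's Theorem~\ref{birkhoff} to write $D\cong J(\Pi)$, and let $x_p\in D$ be the join-irreducible attached to $p$, with $x_p^-=\bigvee_{q<p}x_q$ (the least element of $D$ if $p$ is minimal). By Lemma~\ref{increment-lemma}(b),
\[
\rk(x)=\rk(\bot)+\sum_{p\le x}\delta_p,\qquad \delta_p=\rk(x_p)-\rk(x_p^-).
\]
Construct $Q$ as follows: take a basis $Q_\bot$ of $\bot$, and for each $p$ choose $D_p\subseteq x_p$ with $|D_p|=\delta_p$ extending a basis of $x_p^-$ to a basis of $x_p$. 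Set $Q=Q_\bot\cup\bigcup_p D_p$.

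For each $x\in D$, the set $Q_x=Q_\bot\cup\bigcup_{p\le x}D_p$ spans $x$. This follows by induction along a chain of ideals, since each $x_p^-$ lies below $x$. Moreover $|Q_x|\le\rk(\bot)+\sum_{p\le x}\delta_p=\rk x$. Hence $Q_x$ is a basis of $x$, the pieces are disjoint, and $|Q_x|=\rk x$. Applying this to the top element of $D$, the set $Q$ is independent.

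It remains to see that $Q\cap x=Q_x$, i.e.\ that no $D_p$ with $p\not\le x$ meets $x$. If $e\in D_p\cap x$ with $p\not\le x$, then $e\in x\wedge x_p$. This element corresponds to an ideal not containing $p$, so it lies below $x_p^-$. Then $e\in\cl(\text{basis of }x_p^-)$, which contradicts the independence of the extension $D_p$. Finally, extend $Q$ to a basis of $M$. Since each $Q\cap F$ already spans $F$, the extension adds no element of any $F\in\cF$, and the resulting basis is adapted to every $F\in\cF$.
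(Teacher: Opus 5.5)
Your proof is correct, but it organizes the equivalences differently from the paper. The paper proves four implications:
\begin{enumerate}
\item[(a)] (i)$\Rightarrow$(ii), directly from the Boolean sublattice $\{\cl(S):S\subseteq B\}$.
\item[(b)] (i)$\Rightarrow$(iii), via the family $\{A: A\subseteq\cl(B\cap A)\}$. This is exactly your class: since $B\cap X$ is independent, spanning already forces $\rk X=|B\cap X|$, and closure under $\cap$ is just \eqref{coordinate-identities}. The rank estimates of Lemma~\ref{circuit-certificate} are not needed.
\item[(c)] (ii)$\Rightarrow$(i), by the same join-irreducible increment construction you use.
\item[(d)] (iii)$\Rightarrow$(i), proved independently. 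It adjoins $\varnothing,E$ to $\mathcal R$, builds a basis along one maximal chain, and invokes Lemma~\ref{increment-lemma}(c) to see that the valuations $\rk$ and $A\mapsto|B\cap A|$ agree on all of $\mathcal R$.
\end{enumerate}

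Your genuinely different step is (iii)$\Rightarrow$(ii). Submodularity on flats plus the valuation identity on $\mathcal R$ gives $\cl(X\cap Y)=\cl X\cap\cl Y$. So $\cl$ is a lattice homomorphism on $\mathcal R$ whose image contains $\cF$, hence $D$, and distributivity and the valuation property descend to $D$. This closes a three-step cycle with only one basis construction and avoids Lemma~\ref{increment-lemma}(c). It also records the extra fact that under (iii), closure maps $\mathcal R$ homomorphically onto $D$.

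The paper's route instead keeps (iii)$\Rightarrow$(i) at the level of sets, with no appeal to submodularity. It also shows that any basis built along any maximal chain of $\mathcal R\cup\{\varnothing,E\}$ is already a common adapted basis.

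Finally, your separate check that $Q\cap x=Q_x$ is correct but redundant. Once $Q$ is independent, $Q\cap x$ is an independent subset of $x$ containing the basis $Q_x$, so the two coincide.
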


The lattice $\mathcal R$ is automatically distributive, but its
members need not be flats: its join is union, rather than closure
of the union.

\begin{proof}
\textup{(i)$\Rightarrow$(ii).} Let $B$ be a common adapted basis.
By \eqref{coordinate-identities}, the flats $\cl(S)$, $S\subseteq B$,
form a Boolean sublattice containing $\cF$, and hence $D$. Under the
isomorphism $S\mapsto\cl(S)$, rank is cardinality. Thus $D$ is
distributive and rank is a valuation on $D$.

\textup{(i)$\Rightarrow$(iii).} For the same basis $B$, let
\[
 \mathcal S=\{A\subseteq E:A\subseteq\cl(B\cap A)\}.
\]
This family contains $\cF$ and is closed under unions. It is also
closed under intersections, since for $A,A'\in\mathcal S$,
\[
 A\cap A'\subseteq\cl(B\cap A)\cap\cl(B\cap A')
 =\cl(B\cap A\cap A')
\]
by \eqref{coordinate-identities}. Hence $\mathcal R\subseteq\mathcal S$,
so $B\cap A$ is a basis of $A$ for every $A\in\mathcal R$. Therefore
$\rk(A)=|B\cap A|$ on $\mathcal R$, a valuation by
inclusion--exclusion.

\textup{(ii)$\Rightarrow$(i).} Since $D$ is finitely generated and
distributive, it is finite. Write $D=J(\Pi)$ by
Theorem~\ref{birkhoff}, and denote the flat corresponding to $I$ by
$F_I$. Thus
\[
 F_{I\cup I'}=F_I\vee F_{I'},\qquad
 F_{I\cap I'}=F_I\cap F_{I'}.
\]
The least flat $F_\varnothing$ of $D$ need not equal $0_M$. Choose a
basis $B_\varnothing$ of it. For each $p\in\Pi$, extend a basis of
$F_{I_p^-}$ to a basis of $F_{I_p}$, and let $A_p$ be the set of
added elements. Then
\[
 |A_p|=\delta_p:=\rk(F_{I_p})-\rk(F_{I_p^-}),
\]
and
\begin{equation}\label{new-elements}
 \cl(F_{I_p^-}\cup A_p)=F_{I_p}.
\end{equation}
For each ideal $I$, put
\[
 B_I=B_\varnothing\cup\bigcup_{p\in I}A_p.
\]
We show by induction on $|I|$ that $B_I$ is a basis of $F_I$.
The case $I=\varnothing$ holds by construction. Otherwise choose a
maximal element $p$ of $I$ and put $I'=I\setminus\{p\}$. By induction,
$B_{I'}$ spans $F_{I'}$, which contains $F_{I_p^-}$. Thus
\eqref{new-elements} shows that $B_I=B_{I'}\cup A_p$ spans
\[
 F_{I'}\vee F_{I_p}=F_I.
\]
Also $B_I\subseteq F_I$, and Lemma~\ref{increment-lemma}(a) gives
\[
 |B_I|\le|B_{I'}|+|A_p|
 =\rk(F_{I'})+\delta_p=\rk(F_I).
\]
Hence $B_I$ is a basis of $F_I$. Extend $B_\Pi$ to a basis $B$ of
$M$. For every $I$, the independent set $B\cap F_I$ contains the
basis $B_I$ of $F_I$, so $B\cap F_I=B_I$. Thus $B$ is adapted to
all members of $D$.

\textup{(iii)$\Rightarrow$(i).} The lattice $\mathcal R$ is finite,
since every member is a union of intersections of the finitely many
generators. Adjoin $\varnothing,E$ to obtain $\mathcal R^+$. Rank
remains a valuation, since the added sets are comparable with every
member. Choose a maximal chain
\[
 \varnothing=C_0\subsetneq C_1\subsetneq\cdots\subsetneq C_n=E.
\]
Starting with $B_0=\varnothing$, successively extend a basis $B_{k-1}$
of $C_{k-1}$ to a basis $B_k$ of $C_k$, and put $B=B_n$.
Then $B\cap C_k=B_k$, so
\[
 |B\cap C_k|=\rk(C_k)\qquad(0\le k\le n).
\]
The valuations $A\mapsto\rk(A)$ and $A\mapsto|B\cap A|$ on the finite
distributive lattice $\mathcal R^+$ agree on this chain, and hence
everywhere by Theorem~\ref{birkhoff} and
Lemma~\ref{increment-lemma}(c). In particular $B$ is adapted to every
$F\in\cF$.
\end{proof}

\begin{example}\label{small-example}
Let $M=U_{3,4}$ on $E=\{e_1,e_2,e_3,e_4\}$, and take
$F=\{e_1,e_2\}$ and $G=\{e_2,e_3\}$. Then
\[
 \langle F,G\rangle=\bigl\{\{e_2\},F,G,E\bigr\}
\]
is Boolean, and rank is a valuation since $2+2=1+3$. In the preceding
construction the two join-irreducibles have increments one. Taking
$B_\varnothing=\{e_2\}$, $A_F=\{e_1\}$ and $A_G=\{e_3\}$ gives the
common adapted basis $\{e_1,e_2,e_3\}$.
\end{example}

\begin{corollary}\label{full-height}
A finite distributive sublattice of height $r$ in the lattice of flats
of a rank $r$ matroid has a common adapted basis.
\end{corollary}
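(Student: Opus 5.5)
We will deduce Corollary~\ref{full-height} from Proposition~\ref{adapted-criterion}, direction (ii)$\Rightarrow$(i). Let $D$ be a finite distributive sublattice of $L(M)$ of height $r$, and take $\cF=D$ itself; it is finite and nonempty, and $\langle\cF\rangle=D$. Distributivity is given, so the only thing to verify is that rank is a valuation on $D$. The plan is to use Birkhoff's theorem (Theorem~\ref{birkhoff}) to write $D=J(\Pi)$ with $|\Pi|=r$. Every maximal chain of $D$ then has exactly $r$ strict steps, by the description \eqref{maximal-chains}.

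The key observation is that a chain of length $r$ in $L(M)$ forces every step to raise rank by exactly one. Strict inclusion of flats increases rank by at least one, and ranks lie in $[0,r]$. So along any maximal chain $F_{K_0}\subsetneq\cdots\subsetneq F_{K_r}$ of $D$ we get
\[
\rk(F_{K_0})=0,\qquad \rk(F_{K_r})=r,\qquad \rk(F_{K_k})=k .
\]
In particular, the least element of $D$ is $0_M$, the top element is $E$, and $\rk(F_{K_k})=|K_k|$.

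Next we show that $\rk(F_I)=|I|$ for every ideal $I$. The paper notes after \eqref{maximal-chains} that every ideal $I$ occurs at position $|I|$ on some maximal chain. Applying the previous paragraph to that chain gives $\rk(F_I)=|I|$. By the converse part of Lemma~\ref{increment-lemma}, with $c=0$ and $c_p=1$, the map $I\mapsto|I|$ is a valuation on $J(\Pi)$. Hence rank is a valuation on $D$. Condition (ii) of Proposition~\ref{adapted-criterion} therefore holds, and (i) supplies a common adapted basis for all members of $D$.

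The only step needing care is the counting step. There we must make sure that "height $r$" (longest chain of length $r$) transfers to every maximal chain having length $r$ and every ideal lying on one. Birkhoff's description makes both of these facts available, since all maximal chains have length $|\Pi|$.
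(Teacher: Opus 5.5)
Your proposal is correct and is the paper's proof. You write $D=J(\Pi)$ with $|\Pi|=r$ and note that the $r$ strict rank increases along any maximal chain force each increase to be one and the bottom flat to have rank zero. Since every ideal $I$ lies at position $|I|$ on some maximal chain, this gives $\rk(F_I)=|I|$, and you conclude via Proposition~\ref{adapted-criterion}. Taking $\cF=D$ and citing the converse of Lemma~\ref{increment-lemma} for $I\mapsto|I|$ only make explicit steps the paper leaves implicit.
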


\begin{proof}
Write the sublattice as $J(\Pi)$, with corresponding flats $F_I$.
Its height is $|\Pi|=r$. Along any maximal chain there are $r$ strict
rank increases, whose sum is at most $r$. Each increase is therefore
one, and the least flat has rank zero. Since every ideal $I$ occurs
at position $|I|$ on some maximal chain, $\rk(F_I)=|I|$ for all $I$.
Thus rank is a valuation, and Proposition~\ref{adapted-criterion}
applies.
\end{proof}

\subsection{The rank condition cannot be omitted}

Dimension is a valuation on the lattice of vector subspaces, so in
that setting distributivity alone gives a common adapted basis
\cite[\S2.2.2]{Klyachko1990}. For matroid flats the additional rank
condition is essential.

\begin{example}\label{four-cycle-example}
In $U_{3,4}$, take $F=\{e_1,e_2\}$ and $G=\{e_3,e_4\}$. These flats
generate the Boolean lattice $\{\varnothing,F,G,E\}$, but
\[
 \rk(F)+\rk(G)=4>3=\rk(F\wedge G)+\rk(F\vee G).
\]
A common adapted basis would contain all four elements, which is
impossible in rank three.
\end{example}

Rank three is minimal for this phenomenon. In rank at most two, any
incomparable flats have rank one, their meet has rank zero and their
join has rank two. Thus rank is a valuation on the whole lattice of
flats, and Proposition~\ref{adapted-criterion} applies to every finite
family generating a distributive lattice.

The example is representable over every field $K$: for a basis
$v_1,v_2,v_3$ of $K^3$, use the vectors
\[
 v_1,\quad v_2,\quad v_3,\quad v_1+v_2+v_3.
\]
Any three are independent. The spans of $F$ and $G$ meet in the line
$\langle v_1+v_2\rangle$, although $F\cap G=\varnothing$. Thus the
span map preserves joins but not meets. The ambient basis
$v_1,v_1+v_2,v_3$ is adapted to both planes, but it uses a vector
outside the representing set.

\section{Helly numbers for distributive generation}\label{sec:distributive}

In this section we prove quadratic bounds for $h_{\mathrm{dist}}(r)$.
Recall that a family $\cF$ is distributive if $\langle\cF\rangle$
satisfies
\begin{equation}\label{distributive-law}
 x\wedge(y\vee z)=(x\wedge y)\vee(x\wedge z)
\end{equation}
for all $x,y,z$, or equivalently the dual identity. The upper bound
uses Theorem~\ref{adapted-helly} and a bound on irredundant generators
of finite distributive lattices. The lower bound comes from graphic
matroids.

\begin{theorem}\label{dist-bounds}
For every $r\ge2$,
\[
 h_{\mathrm{dist}}(r)\ge
 \begin{cases}
 r+1,&2\le r\le6,\\
 \left\lfloor(r-1)^2/4\right\rfloor,&r\ge7,
 \end{cases}
\qquad
 h_{\mathrm{dist}}(r)\le
 \begin{cases}
 r+1,&2\le r\le4,\\
 2r-3,&5\le r\le7,\\
 \left\lfloor(r-1)^2/4\right\rfloor+1,&r\ge8.
 \end{cases}
\]
The same bounds hold when the distributive Helly number is restricted to
graphic matroids, or to matroids representable over any prescribed field.
\end{theorem}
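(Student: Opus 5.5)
We prove the upper and lower bounds separately. For the upper bound, let $\cF$ be a family of flats in a rank $r$ matroid in which every subfamily of at most $h$ members generates a distributive lattice, where $h$ is the claimed upper bound.

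\emph{Reduction to finite families.} First we reduce to finite $\cF$. Distributivity of $\langle\cF\rangle$ is a statement about triples of elements, and each element is a finite lattice term. So it suffices that every finite subfamily generates a distributive lattice.

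\emph{The key step: a full-height distributive lattice.} The main idea is to find a distributive sublattice $D\supseteq\langle\cF\rangle$ of full height $r$, after adjoining $0_M$ and $E$. Corollary~\ref{full-height} then gives a common adapted basis. Conversely, Theorem~\ref{adapted-helly} shows it suffices to produce common adapted bases for subfamilies of size $r+1$; this is where the term $r+1$ comes from. So we first treat the case in which some subfamily already generates a lattice with a maximal chain of length $r$.

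\emph{The generator bound.} In general the argument runs through irredundant generating sets. Suppose $\langle\cF\rangle$ is not distributive. Then there is a witness: three terms $x,y,z$ violating \eqref{distributive-law}. Each term involves finitely many generators. We would like to bound the number of generators needed by a minimal non-distributive subfamily $\cF_0$. Every proper subfamily of $\cF_0$ generates a distributive lattice $D'$ of height at most $r$. By Birkhoff (Theorem~\ref{birkhoff}), $D'\cong J(\Pi)$ with $|\Pi|\le r$. The number of irredundant generators of $J(\Pi)$ is bounded by the maximum size of an antichain in a suitable poset built from $\Pi$, such as pairs $(p,q)$ with $p\not\le q$. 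For $|\Pi|=n\le r-1$ (height $<r$, else we are done by Corollary~\ref{full-height}), this yields roughly $\lfloor (r-1)^2/4\rfloor$: think of it as a bipartite split of the $r-1$ points. Adding back one removed generator gives the $+1$. The $2r-3$ term handles small ranks, where the general antichain bound is dominated by an alternative count.

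\emph{Lower bound.} For the lower bound, the $r+1$ part uses $U_{r,r+1}$, which is graphic. Here we must check that every $r$ singletons generate a distributive lattice (they have a common adapted basis), while all $r+1$ do not. Indeed, the join of the $r$ points is $E$, and distributivity fails with three points. For the quadratic part, we take a graphic matroid and a family of about $(r-1)^2/4$ flats indexed by pairs $(i,j)$ with $i\le a<j$ and $a\approx (r-1)/2$. These would be, for instance, flats of the cycle-type built from a complete bipartite-like graph. They are designed so that deleting any one flat leaves a distributive (indeed Boolean-embeddable) family, while the whole family closes up a circuit, in the manner of Lemma~\ref{circuit-certificate}, and breaks distributivity. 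Representability over any field is automatic for graphic matroids.

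\emph{Main obstacle.} The hard step is the sharp irredundant-generator bound for distributive lattices of height $<r$ embedded in $L(M)$, and making the lower-bound construction exactly critical. I would first try the antichain count in $\Pi\times\Pi$ for the former, and verify the latter by Proposition~\ref{adapted-criterion}(iii).
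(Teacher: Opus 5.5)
Your upper-bound outline has the right skeleton: a minimal failure, irredundance of its subfamilies, Birkhoff, and the interplay of Theorem~\ref{adapted-helly} with Corollary~\ref{full-height}. But both load-bearing steps are missing.

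\emph{The reduction to height $r-1$.} ``Else we are done by Corollary~\ref{full-height}'' is not an argument. (Also, enclosing a non-distributive $\langle\cF\rangle$ in a full-height distributive sublattice is impossible.) The correct step runs as follows. Suppose the minimal failure has $m>r+1$ members and every one-member deletion had a common adapted basis. Then every subfamily of at most $r+1$ members would have one, and Theorem~\ref{adapted-helly} would give one for the whole family, contradicting non-distributivity. So you choose a deletion $\cF'$ \emph{without} a common adapted basis. Only then does Corollary~\ref{full-height} force $\langle\cF'\rangle$ to have height at most $r-1$.

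\emph{The irredundant-generator bound.} This is the more serious gap: you need a bound on irredundant families in a distributive lattice of height $n=r-1$, and you do not supply one. The direction you indicate cannot give it. The true maximum is $\max\{2n-2,\lfloor n^2/4\rfloor\}$, and the $2n-2$ term is attained. For $n=3$, the sets $\{1\},\{1,2\},\{3\},\{2,3\}$ are four irredundant ideals of a three-element antichain, whereas $\lfloor 9/4\rfloor=2$. So a bipartite/antichain count producing $\lfloor n^2/4\rfloor$ is false for $3\le n\le 6$, and attributing $2r-3$ to ``an alternative count'' proves nothing.

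The missing idea is the witness digraph. For each member $S_i$ (an ideal of $\Pi$), irredundance gives $\alpha_i\in S_i$, $\beta_i\notin S_i$ such that every other member satisfies $\alpha_i\in S\Rightarrow\beta_i\in S$; otherwise $S_i$ would be a union of intersections of the others. The arcs $\alpha_i\to\beta_i$ are distinct and form a reduced digraph. A strong component on $s$ vertices carries at most $2s-2$ arcs. Distinct components are joined by at most one arc, and the condensation has a triangle-free underlying graph. Mantel's theorem plus convexity in the number of components then gives $\max\{2t-2,\lfloor t^2/4\rfloor\}$. You also need two further cases. An irredundant family containing the top (or bottom) of its generated lattice is that element plus an irredundant family of height at most $n-1$. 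And the small cases (heights $1$ and $2$ allow at most $2$ and $3$ generators) settle $r=2,3$.

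\emph{The lower bound.} The quadratic family is never constructed, and the verification you propose fails in principle. Lemma~\ref{circuit-certificate} and Proposition~\ref{adapted-criterion} detect whether a common adapted basis exists, not whether the generated lattice is distributive, and Example~\ref{four-cycle-example} shows these differ. Worse, for $r\ge7$ the family has $m=\lfloor(r-1)^2/4\rfloor>r+1$ members. By Theorem~\ref{adapted-helly}, some proper subfamily then \emph{has no} common adapted basis, since otherwise the whole family would have one and be distributive. So minimality cannot be certified through adapted bases or criterion~(iii), and non-distributivity cannot be certified by a circuit. Both must be checked directly on the lattice.

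The paper does this with the graph $G_{p,q}$: $q$ paths $u,x_j,v$ plus one path $u,y_1,\dots,y_p,v$, with $p+q=r-1$. The flats are $H_{ij}=\{a_i\}\cup\bigcup_{k\ne j}\{b_k,c_k\}$. Here $\xi_i$ labels $a_i$, $\eta_j$ labels the pair $\{b_j,c_j\}$, and $F_S$ is the closure of the edges labelled by $S$.
\begin{itemize}
\item Deleting $H_{ij}$ leaves flats whose labels lie in the implication sublattice $\{S:\xi_i\in S\Rightarrow\eta_j\in S\}$. On this sublattice $S\mapsto F_S$ is an injective lattice homomorphism; the only obstruction to preserving meets is the edge $w$, which that implication controls.
\item The whole family generates $X_i=\{a_i\}$, $Y_j=\{b_j,c_j\}$ and $F_\Xi=\{a_1,\dots,a_p\}$. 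These give the explicit violation $F_\Xi\vee(Y_j\wedge Y_k)=F_\Xi\subsetneq F_\Xi\cup\{w\}=(F_\Xi\vee Y_j)\wedge(F_\Xi\vee Y_k)$.
\end{itemize}

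A smaller slip: in $U_{r,r+1}$ with $r\ge3$, three singletons do satisfy the distributive law, since they have a common adapted basis. The violation needs two singletons $F,G$ and the join $Z$ of the other $r-1$, so that $Z\vee G=E$.
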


Thus $h_{\mathrm{dist}}(r)=(r-1)^2/4+O(1)$, whereas
$h_{\mathrm{cb}}(r)=r+1$. The distributive bounds coincide for
$2\le r\le4$ and differ by one for $r\ge8$.
Table~\ref{tab:numbers} gives the values in small ranks.

\begin{table}[ht]
\centering
\small
\begin{tabular}{@{}lrrrrrrrrr@{}}
\toprule
Rank $r$&2&3&4&5&6&7&8&9&10\\
\midrule
$h_{\mathrm{cb}}(r)$&3&4&5&6&7&8&9&10&11\\
Lower bound for $h_{\mathrm{dist}}(r)$&3&4&5&6&7&9&12&16&20\\
Upper bound for $h_{\mathrm{dist}}(r)$&3&4&5&7&9&11&13&17&21\\
\bottomrule
\end{tabular}
\caption{The Helly numbers and bounds proved in this paper.}
\label{tab:numbers}
\end{table}

\subsection{Minimal failures}\label{subsec:minimal-failures}

A \emph{minimal failure of distributive generation} is a finite
family that is not distributive but whose proper subfamilies are.
A family $\mathcal X$ in a lattice is \emph{irredundant} if no member
belongs to the sublattice generated by the others. As before, we use
only binary meets and joins, without adjoining endpoints.

\begin{lemma}\label{minimal-failures}
\begin{enumerate}[label=\textup{(\alph*)},leftmargin=*,itemsep=1pt]
\item Every family of flats that is not distributive contains a
 minimal failure.
\item A minimal failure and all its subfamilies are irredundant.
\item If a rank $r$ matroid has a minimal failure with $m$ members,
 then $h_{\mathrm{dist}}(r)\ge m$. If every minimal failure in every
 rank $r$ matroid has at most $N$ members, then
 $h_{\mathrm{dist}}(r)\le N$.
\end{enumerate}
\end{lemma}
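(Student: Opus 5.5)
Parts (a) and (c) are essentially definitional bookkeeping; part (b) needs one real observation.

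For (a), let $\cF$ be a non-distributive family. By definition $\langle\cF\rangle$ fails \eqref{distributive-law} for some $x,y,z\in\langle\cF\rangle$. Each of $x,y,z$ is a lattice term in finitely many members of $\cF$. Hence some finite subfamily $\cF_0\subseteq\cF$ already has $x,y,z\in\langle\cF_0\rangle$, and $\cF_0$ is not distributive. This step matters because $\cF$ may be infinite. Among the non-distributive subfamilies of $\cF_0$, choose one, $\mathcal X$, of minimum cardinality. Every proper subfamily of $\mathcal X$ is then distributive, so $\mathcal X$ is a minimal failure. It is nonempty, because the empty family is distributive by convention.

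For (b), let $\mathcal X$ be a minimal failure. Suppose some $X\in\mathcal X$ lies in $\langle\mathcal X\setminus\{X\}\rangle$. Then $\mathcal X\setminus\{X\}$ is nonempty: otherwise $\mathcal X=\{X\}$, and $\langle\{X\}\rangle=\{X\}$ is trivially distributive. Since the generated sublattice is closed under meets and joins, $\langle\mathcal X\rangle=\langle\mathcal X\setminus\{X\}\rangle$. The right-hand side is distributive by minimality, which contradicts the assumption that $\mathcal X$ fails. So $\mathcal X$ is irredundant.

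For a subfamily $\mathcal Y\subseteq\mathcal X$, suppose $Y\in\mathcal Y$ lies in $\langle\mathcal Y\setminus\{Y\}\rangle$. This sublattice is contained in $\langle\mathcal X\setminus\{Y\}\rangle$, so $Y$ would be redundant in $\mathcal X$, which we just excluded. Hence every subfamily of $\mathcal X$ is irredundant. The one point needing care in (b) is the singleton case, handled above, where the convention $\langle\varnothing\rangle=\varnothing$ must not create a false redundancy.

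For (c), first the lower bound. Suppose $\mathcal X$ is a minimal failure with $m$ members, and suppose $h<m$ had the Helly property. Every subfamily of $\mathcal X$ of size at most $h$ is proper, hence distributive. The Helly property would then force $\mathcal X$ to be distributive, a contradiction. So $h_{\mathrm{dist}}(r)\ge m$.

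Now the upper bound. Assume every minimal failure has at most $N$ members, and let $\cF$ be a family whose subfamilies of size at most $N$ are all distributive. If $\cF$ were not distributive, (a) would give a minimal failure $\mathcal X\subseteq\cF$ with $|\mathcal X|\le N$. But then $\mathcal X$ would be distributive by hypothesis, a contradiction. So $N$ has the Helly property, and $h_{\mathrm{dist}}(r)\le N$.
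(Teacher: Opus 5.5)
Your proposal is correct and follows the paper's proof essentially step by step. You use the same finite reduction and minimal choice in (a), the same argument in (b) that deleting a generated member leaves the same nondistributive lattice (plus heredity of irredundance), and the same direct unpacking of the definition of $h_{\mathrm{dist}}(r)$ in (c); your extra care with the singleton case and with infinite families only makes explicit what the paper leaves implicit.
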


\begin{proof}
(a) A violation of \eqref{distributive-law} involves three lattice
terms, hence only finitely many generators. Choose a minimal
subfamily among these generators that is not distributive.

(b) If a member were generated by the others, deleting it would leave
the same nondistributive lattice, contrary to minimality.
Every subfamily of an irredundant family is irredundant.

(c) A minimal failure with $m$ members disproves the Helly property
for every $h<m$. Conversely, by (a), every failure contains a minimal
one; if its size is at most $N$, it violates the local hypothesis for
$h=N$.
\end{proof}

\begin{example}\label{singleton-failure}
The $r+1$ singleton flats of $U_{r,r+1}$, $r\ge2$, form a minimal
failure. Every proper subfamily has a common adapted basis and is
therefore distributive. For the whole family, take distinct
singletons $F,G$ and let $Z$ be the join of the remaining $r-1$.
Then $Z\vee G=E$, while $F\wedge Z=F\wedge G=\varnothing$, so
\[
 F\wedge(Z\vee G)=F\ne\varnothing=(F\wedge Z)\vee(F\wedge G).
\]
Since $U_{r,r+1}$ is the cycle matroid of a cycle of length $r+1$,
this gives $h_{\mathrm{dist}}(r)\ge r+1$ already for graphic matroids.
\end{example}

\subsection{Irredundant generators}\label{subsec:generators}

Let $g(n)$ be the supremum of the sizes of irredundant families in
finite distributive lattices of height at most $n$, and put
\[
 f(n)=\max\{2n-2,\lfloor n^2/4\rfloor\}\qquad(n\ge1).
\]
We will prove that $g(n)=f(n)$ for $n\ge3$. By Birkhoff's theorem,
we may first consider lattices of sets. For $\alpha,\beta$ in a finite
set $\Omega$, the subsets satisfying
\begin{equation}\label{implication-sublattice}
 \alpha\in S\ \Longrightarrow\ \beta\in S
\end{equation}
form a sublattice of $2^\Omega$. Thus a set that violates an
implication satisfied by all the others cannot be generated by them.

A directed graph is \emph{reduced} if it has no loops or parallel
arcs, and no arc whose head can be reached from its tail by a directed
path avoiding that arc.

\begin{lemma}\label{pair-witnesses}
Let $S_1,\ldots,S_m$ be an irredundant family of nonempty proper
subsets of a finite set $\Omega$. For each $i$ there are
$\alpha_i\in S_i$ and $\beta_i\notin S_i$ such that every $S_j$ with
$j\ne i$ satisfies $\alpha_i\in S_j\Rightarrow\beta_i\in S_j$.
The arcs $\alpha_i\to\beta_i$ are distinct and form a reduced
directed graph on $\Omega$.
\end{lemma}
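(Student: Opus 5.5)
The proof has three parts. First I find the witness pair for each $i$ using a standard description of a generated sublattice. Then I check that the arcs are distinct and that there are no loops or parallel arcs. Finally I show, by contradiction, that no arc can be bypassed by a directed path.

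\emph{Existence of witnesses.} Fix $i$ and let $L_i$ be the sublattice of $2^\Omega$ generated by the $S_j$ with $j\ne i$, using only binary unions and intersections. For a finite family of subsets, a set $S$ lies in this generated lattice exactly when it has the following property: for every $\alpha\in S$ there is a member of $L_i$ containing $\alpha$ and contained in $S$. Indeed, $S$ is then the union of intersections of generators, and the intersection of all generators containing $\alpha$ is the smallest such member. Concretely, for $\alpha\in\Omega$ put
\[
N_\alpha=\bigcap\{S_j: j\neq i,\ \alpha\in S_j\},
\]
which is defined when at least one generator contains $\alpha$. Then $S\in L_i$ if and only if $S$ is nonempty, every $\alpha\in S$ lies in some $S_j$ with $j\ne i$, and $N_\alpha\subseteq S$ for all $\alpha\in S$. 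The nonemptiness clause is needed because $\varnothing$ is generated only if it arises as an intersection.

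Since $S_i\notin L_i$ and $S_i\ne\varnothing$, one of the other two conditions fails at some $\alpha_i\in S_i$. There are two cases.
\begin{itemize}
\item If no $S_j$ with $j\ne i$ contains $\alpha_i$, the implication $\alpha_i\in S_j\Rightarrow\beta_i\in S_j$ holds vacuously for every $j\ne i$. Any $\beta_i\notin S_i$ then works; one exists because $S_i$ is proper.
\item Otherwise $N_{\alpha_i}\not\subseteq S_i$. Pick $\beta_i\in N_{\alpha_i}\setminus S_i$. Every $S_j$ with $j\ne i$ containing $\alpha_i$ contains $N_{\alpha_i}$, hence contains $\beta_i$.
\end{itemize}
In both cases $\alpha_i\in S_i$ and $\beta_i\notin S_i$, so $\alpha_i\ne\beta_i$ and there are no loops.

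\emph{Distinctness.} Suppose $\alpha_i\to\beta_i$ and $\alpha_k\to\beta_k$ are the same arc with $i\ne k$. Then $S_k$ satisfies the implication of $i$, yet $S_k$ contains $\alpha_k=\alpha_i$ and omits $\beta_k=\beta_i$, a contradiction. So the $m$ arcs are distinct, and in particular there are no parallel arcs.

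\emph{Reducedness.} Suppose there is a directed path $\alpha_i=\gamma_0\to\gamma_1\to\cdots\to\gamma_t=\beta_i$ using arcs of the graph other than the arc $\alpha_i\to\beta_i$. Each arc on the path is $\alpha_k\to\beta_k$ for some $k\ne i$, by distinctness. The set $S_i$ satisfies the implication of every $k\ne i$. Since $\gamma_0=\alpha_i\in S_i$, induction along the path gives $\gamma_t=\beta_i\in S_i$, contradicting $\beta_i\notin S_i$. Hence no arc can be bypassed, and the graph is reduced.

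The main obstacle is the first step: stating precisely when a set lies in the sublattice generated without adjoining endpoints. This affects the empty set and any elements lying in no other generator. The two-case split handles these cases, and the remaining steps are formal.
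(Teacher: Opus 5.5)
Your proposal is correct and follows essentially the paper's argument. The paper also builds the sets $N_\alpha$, but argues by contradiction: if no witness pair exists, then $S_i=\bigcup_{\alpha\in S_i}N_\alpha$, which also covers your vacuous case. Your distinctness and reducedness steps are identical to the paper's (one harmless slip: your ``iff'' characterization of $L_i$ wrongly excludes $\varnothing$, which can lie in $L_i$ as an intersection, but you only use the ``if'' direction).
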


\begin{proof}
Suppose no such pair exists for $S_i$. For every $\alpha\in S_i$ and
$\beta\notin S_i$, some $S_j$, $j\ne i$, contains $\alpha$ but not
$\beta$. For each $\alpha\in S_i$, let $N_\alpha$ be the intersection
of the sets $S_j$, $j\ne i$, containing $\alpha$. This collection of sets is nonempty since $S_i$ is proper, and
$\alpha\in N_\alpha\subseteq S_i$. Hence
\[
 S_i=\bigcup_{\alpha\in S_i}N_\alpha,
\]
contrary to irredundance.

Choose such a \emph{witness pair} $(\alpha_i,\beta_i)$ for each $i$.
Its arc is not a loop. Two arcs cannot agree, since the corresponding
sets would both violate the same implication. Also, $S_i$ satisfies
every implication except its own. A directed path from $\alpha_i$
to $\beta_i$ using other arcs would therefore imply
$\beta_i\in S_i$, a contradiction.
\end{proof}

\begin{example}\label{path-example}
On $\Omega=\{1,2,3\}$ the sets
\[
 \{1\},\qquad\{1,2\},\qquad\{3\},\qquad\{2,3\}
\]
have witness pairs $(1,2)$, $(2,3)$, $(3,2)$ and $(2,1)$,
respectively. Thus they are irredundant. Their witness graph is the
path $1,2,3$ with both orientations of each edge; it has $2\cdot3-2$
arcs.
\end{example}

\begin{lemma}\label{arc-bound}
A reduced directed graph on $t\ge1$ vertices has at most $f(t)$ arcs.
\end{lemma}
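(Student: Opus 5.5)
The plan is to decompose the reduced directed graph $G$ on $t$ vertices into its strongly connected components. I will bound the arcs inside components and the arcs between components separately, and then optimize over the component sizes. Let the components be $C_1,\ldots,C_k$ with $|C_i|=s_i$ and $\sum s_i=t$.

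\emph{Arcs inside a component.} The first claim is that $G[C_i]$ has at most $2(s_i-1)$ arcs. If $s_i=1$ there are no arcs, since loops are excluded. If $s_i\ge2$, I would use an ear decomposition of the strongly connected digraph $G[C_i]$. Start with a directed cycle on $c\ge2$ vertices and repeatedly add directed paths (ears) whose endpoints lie in the current strong subgraph and whose interior vertices are new. A trivial ear is a single arc $u\to v$ between old vertices. Such an ear cannot occur: the subgraph built before it is strongly connected and contains $u,v$, so $v$ is reachable from $u$ avoiding that arc, contradicting reducedness. Hence every ear contributes $j\ge1$ new vertices and $j+1\le 2j$ arcs. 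The initial cycle contributes $c\le 2(c-1)$ arcs. Summing gives at most $2(s_i-1)$ arcs.

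\emph{Arcs between components.} Suppose $u\to v$ and $u'\to v'$ are distinct arcs with $u,u'\in C_a$ and $v,v'\in C_b$, $a\ne b$. Inside $C_a$ there is a path from $u$ to $u'$, and inside $C_b$ a path from $v'$ to $v$; these paths use only intra-component arcs. Together with $u'\to v'$ they give a path from $u$ to $v$ avoiding $u\to v$, which is a contradiction. So there is at most one arc between each ordered pair of components. The resulting condensation is acyclic, and it is itself reduced, since a bypass path in the condensation lifts to a bypass path in $G$ by the same splicing through strong components. In an acyclic digraph every triangle of the underlying graph is transitively oriented, $a\to b\to c$ together with $a\to c$, and the arc $a\to c$ would then be bypassed. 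So the underlying simple graph of the condensation is triangle-free, and Mantel's theorem gives at most $\lfloor k^2/4\rfloor$ such arcs.

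\emph{Optimization.} Combining the two bounds, the number of arcs is at most $2(t-k)+\lfloor k^2/4\rfloor$, where $1\le k\le t$. The function $k\mapsto 2(t-k)+k^2/4$ is convex in $k$, so its maximum over $[1,t]$ is attained at an endpoint. At $k=1$ it equals $2t-2$, and at $k=t$ it equals $t^2/4$. Since the arc count is an integer, it is at most $f(t)$. The main point needing care is the intra-component bound, specifically ruling out trivial ears, which is handled above. Everything else is routine splicing of paths through strong components.
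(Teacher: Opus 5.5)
Your proof is correct and is essentially the paper's argument: strong components, at most one arc between components, an acyclic triangle-free condensation bounded by Mantel's theorem, and convexity in $k$. The one difference is the internal bound, which you get from a directed ear decomposition, whereas the paper takes the union of an in-arborescence and an out-arborescence at a root and notes that any other arc would be bypassed inside this strongly connected union of at most $2s-2$ arcs (this avoids citing the ear decomposition theorem). One trivial slip: at $k=1$ the convex function $2(t-k)+k^2/4$ equals $2t-2+1/4$, not $2t-2$, but your integrality step still gives $f(t)$.
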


\begin{proof}
Let the strongly connected components have sizes $s_1,\ldots,s_k$,
with $\sum_c s_c=t$. In a component of size $s$, choose a root and
spanning trees directed towards and away from it. Their union is
strongly connected and has at most $2s-2$ arcs. Every arc of the
component must belong to this union, since otherwise it would have
an alternative directed path. Thus there are at most $2s-2$ internal
arcs.

There is at most one arc from one component to another: a second
such arc, together with paths inside the two components, would make
the first redundant. The condensation is acyclic. If its underlying
graph contained a triangle, its acyclic orientation would give an
arc and an alternative two-step route, again contradicting
reducedness. Mantel's theorem \cite[Chapter~7]{Diestel2017} therefore
bounds the number of arcs between components by $\lfloor k^2/4\rfloor$.
The total is at most
\[
 \sum_{c=1}^k(2s_c-2)+\left\lfloor\frac{k^2}{4}\right\rfloor
 =\left\lfloor 2(t-k)+\frac{k^2}{4}\right\rfloor.
\]
The expression inside the floor is convex in $k$. Its endpoint
values on $1\le k\le t$ are $2t-2+1/4$ and $t^2/4$, which give
the bound $f(t)$.
\end{proof}

\begin{proposition}\label{generator-bound}
We have $g(1)=2$, $g(2)=3$, and, for $n\ge3$,
\[
 g(n)\le f(n)=\max\left\{2n-2,\left\lfloor\frac{n^2}{4}\right\rfloor\right\}.
\]
\end{proposition}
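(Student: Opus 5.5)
The plan is to reduce, via Birkhoff's theorem (Theorem~\ref{birkhoff}), to irredundant families of sets in $J(\Pi)\subseteq 2^\Pi$ with $|\Pi|=n'\le n$, where $n'$ is the height of the lattice. Then Lemma~\ref{pair-witnesses} and Lemma~\ref{arc-bound} give the bound. Let $\mathcal X$ be an irredundant family in a finite distributive lattice $D$ of height at most $n$. Replacing $D$ by $\langle\mathcal X\rangle$ does not increase height, so we may assume $D=\langle\mathcal X\rangle\cong J(\Pi)$ with $|\Pi|\le n$. Each member is then an order ideal, i.e.\ a subset of $\Omega=\Pi$. Irredundance is preserved because the embedding $J(\Pi)\subseteq 2^\Omega$ respects meets and joins, so sublattices generated inside $2^\Omega$ coincide with those in $J(\Pi)$.

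The one complication is that members may be $\varnothing$ or $\Omega$, which Lemma~\ref{pair-witnesses} excludes. At most one member equals the bottom and at most one the top. I claim that if the bottom $\varnothing$ lies in $\mathcal X$ together with at least one other member $S$, then some other meet produces it, or else it is genuinely extra. So I will handle the endpoints by a separate count. The remaining members $S_1,\dots,S_m$ are nonempty proper subsets, still irredundant as a subfamily. Lemma~\ref{pair-witnesses} gives $m$ distinct arcs of a reduced digraph on $t=|\Omega|\le n$ vertices, so $m\le f(t)\le f(n)$ by Lemma~\ref{arc-bound}, using that $f$ is nondecreasing.

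The main obstacle is absorbing the possible endpoints $\varnothing$ and $\Omega$ without exceeding $f(n)$. My approach is to observe that if $\Omega\in\mathcal X$ is irredundant, then no join of the others equals $\Omega$. In particular, the union of the other members omits some element $\omega$, and every other member avoids $\omega$. Dually, if $\varnothing\in\mathcal X$, some element $\omega'$ lies in every other member. Deleting $\omega$ (resp.\ $\omega'$) from $\Omega$ loses no information: the other members embed as an irredundant family of subsets of a set of size $t-1$. Each endpoint present thus costs one vertex, and we get $|\mathcal X|\le f(t-e)+e$ where $e\in\{0,1,2\}$ is the number of endpoints present. Then I would check $f(t-1)+1\le f(t)$ and $f(t-2)+2\le f(t)$ for $t\ge3$; the former is clear, since $2t-2$ grows by $2$. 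For the latter, small cases like $t=3$ give $f(1)+2=2+2$, which is not $\le 4$ with $f(1)=0$ unless values are handled carefully. Here I would use the true small values $g(1)=2$ and $g(2)=3$ in the recursion, verifying $g(1)+2\le f(3)=4$ and $g(2)+\dots$ directly.

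Finally, the small values are computed directly. In height $1$ the lattice is a two-element chain and both elements form an irredundant family, giving $g(1)=2$. In height $2$ the lattices are the $3$-chain and the Boolean square. Three chain elements are irredundant, while four elements of the square are not, since the top is a join. This gives $g(2)=3$.
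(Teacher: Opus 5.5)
Your skeleton matches the paper's: the Birkhoff reduction, Lemmas~\ref{pair-witnesses} and~\ref{arc-bound} when no endpoint of $\langle\mathcal X\rangle$ is a member, and removal of an endpoint to lower the height. However, the endpoint accounting you actually state, $|\mathcal X|\le f(t-e)+e$, is false. Take the maximal chain $\varnothing\subsetneq\{1\}\subsetneq\{1,2\}\subsetneq\{1,2,3\}$. It is irredundant, with $t=3$ and $e=2$, yet $f(1)+2=2<4$ (recall $f(1)=0$; you are conflating $f(1)$ with $g(1)=2$). The faulty step is applying Lemma~\ref{pair-witnesses} and $f$ to the reduced family. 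After you delete $\omega$ (missed by every non-top member) and $\omega'$ (common to every non-bottom member), other members can become the empty set or the whole reduced ground set. In the example, $\{1\}$ and $\{1,2\}$ turn into $\varnothing$ and $\{2\}$, so they are no longer nonempty proper subsets and the witness lemma does not apply. Your closing remark about feeding the true small values into ``the recursion'' points the right way, but no correct recursion is written down. Also, the sentence about the bottom being ``produced by some other meet, or else genuinely extra'' is not an argument.

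The repair is exactly the paper's. What your deletion of $\omega$ really shows is that $\mathcal X\setminus\{T\}$ is an irredundant family in a distributive lattice of height at most $t-1$. Equivalently, every element of $\langle\mathcal X\setminus\{T\}\rangle$ lies below $\bigvee(\mathcal X\setminus\{T\})<T$, so each of its chains extends by $T$. You should therefore bound this family by $g(t-1)\le g(n-1)$, not by $f(t-1)$. Since $g$ already absorbs any endpoints that remain or newly appear, you only ever peel off one endpoint, and the least-element case is dual. Together with the endpoint-free case this gives $g(n)\le\max\{f(n),1+g(n-1)\}$ for $n\ge3$. The induction closes using $g(3)\le\max\{4,1+g(2)\}=4=f(3)$ and $f(n)\ge f(n-1)+1$ for $n\ge4$. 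For the small values, you should also say why the $3$-chain and the four-element Boolean lattice are the only distributive lattices of height $2$: three pairwise incomparable middle elements would form an $M_3$.
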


\begin{proof}
A lattice of height at most one has at most two elements, and the
two-element chain is irredundant. A distributive lattice of height
two is either the three-element chain or the four-element Boolean
lattice: any three elements strictly between its endpoints would
form an $M_3$. The chain is irredundant, whereas the four elements of
the Boolean lattice are not. Hence $g(1)=2$ and $g(2)=3$.

Let $\mathcal X$ be a nonempty irredundant family, and replace its
ambient lattice by $D=\langle\mathcal X\rangle$, of height $t\le n$.
If $\mathcal X$ contains neither endpoint of $D$, Birkhoff's theorem
(Theorem~\ref{birkhoff}) identifies its members with nonempty proper
ideals of a poset of size $t$. They remain irredundant as subsets of
that poset, since meets and joins are intersections and unions.
Lemmas~\ref{pair-witnesses} and~\ref{arc-bound} give
$|\mathcal X|\le f(t)\le f(n)$.

If $\mathcal X$ contains the greatest element $T$ of $D$ and has
another member, then $\bigvee(\mathcal X\setminus\{T\})<T$ by
irredundance. The lattice generated by the remaining members has
height at most $t-1$, since each of its chains extends by $T$.
Thus $|\mathcal X|\le1+g(n-1)$. The singleton case is immediate,
and the least-element case is dual. Consequently
\[
 g(n)\le\max\{f(n),1+g(n-1)\}\qquad(n\ge3).
\]
For $n=3$ this gives $g(3)\le4=f(3)$. Since
$f(n)\ge f(n-1)+1$ for $n\ge4$, induction completes the proof.
\end{proof}

\begin{remark}\label{generator-sharp}
The bound is sharp for every $n\ge3$. Choose a tree on an $n$-element
set $\Omega$. For each oriented edge $\alpha\to\beta$, take the
component containing $\alpha$ after deleting that edge. Among the
$2n-2$ resulting sets, only this one violates
$\alpha\in S\Rightarrow\beta\in S$, so they are irredundant by
\eqref{implication-sublattice}. Example~\ref{path-example} is the
three-vertex case.

For $n\ge4$, partition $\Omega=\Xi\sqcup\Upsilon$ as equally as
possible, with $\Xi=\{\xi_1,\ldots,\xi_p\}$ and
$\Upsilon=\{\eta_1,\ldots,\eta_q\}$. The $pq$ sets
\[
 S_{ij}=\{\xi_i\}\cup(\Upsilon\setminus\{\eta_j\})
\]
are irredundant, since only $S_{ij}$ violates
$\xi_i\in S\Rightarrow\eta_j\in S$. As $pq=\lfloor n^2/4\rfloor$,
we obtain $g(n)=f(n)$ for all $n\ge3$.
\end{remark}

\subsection{The upper bound}\label{subsec:upper}

\begin{proposition}\label{dist-upper}
For every $r\ge2$,
\[
 h_{\mathrm{dist}}(r)\le
 \max\left\{r+1,\,2r-3,\,
           \left\lfloor\frac{(r-1)^2}{4}\right\rfloor+1\right\}.
\]
\end{proposition}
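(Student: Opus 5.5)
By Lemma~\ref{minimal-failures}(c), it suffices to bound the size of every minimal failure $\mathcal X=\{F_1,\ldots,F_m\}$ in a rank $r$ matroid by $N=\max\{r+1,2r-3,\lfloor(r-1)^2/4\rfloor+1\}$. Suppose $m>N$; in particular $m\ge r+2$. The plan is to show that $\langle\mathcal X\rangle$ is then distributive, contradicting that $\mathcal X$ is a failure.

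First, every subfamily of at most $r+1$ members is a proper subfamily. By minimality it generates a distributive lattice. I then want to upgrade this to the existence of a common adapted basis, so that Theorem~\ref{adapted-helly} gives a common adapted basis for all of $\mathcal X$. That would make $\langle\mathcal X\rangle$ distributive, a contradiction.

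The upgrade goes through the height. Let $\mathcal Y$ be a proper subfamily and $D=\langle\mathcal Y\rangle$, which is distributive. By Lemma~\ref{minimal-failures}(b), $\mathcal Y$ is irredundant in $D$. If $D$ has height $r$, then Corollary~\ref{full-height} provides a common adapted basis. Otherwise $D$ has height at most $r-1$, so Proposition~\ref{generator-bound} gives $|\mathcal Y|\le g(r-1)$. For $r-1\ge3$ this is $f(r-1)=\max\{2r-4,\lfloor(r-1)^2/4\rfloor\}$. For $r=3$ it is $g(2)=3$, and for $r=2$ it is $g(1)=2$.

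Now take $\mathcal Y$ to be any proper subfamily with $|\mathcal Y|=m-1$, which is possible since $m-1\ge1$. Since $m-1\ge N\ge f(r-1)+1>g(r-1)$, the lattice $\langle\mathcal Y\rangle$ must have height exactly $r$. The small ranks need a direct check:
\begin{itemize}
\item For $r=2$: $N=3$, so $m-1\ge3>2=g(1)$.
\item For $r=3$: $N=4$, so $m-1\ge4>3=g(2)$.
\item For $r\ge4$: $N\ge\max\{2r-3,\lfloor(r-1)^2/4\rfloor+1\}=f(r-1)+1$.
\end{itemize}
Hence every $(m-1)$-subfamily has a common adapted basis. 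So does every subfamily of size at most $r+1\le m-1$, since it is contained in one of them.

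Theorem~\ref{adapted-helly} then yields a common adapted basis for $\mathcal X$. This makes $\langle\mathcal X\rangle$ distributive, which is a contradiction. Therefore every minimal failure has at most $N$ members.

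The main obstacle is the height/irredundance bookkeeping. Proposition~\ref{generator-bound} bounds irredundant families in lattices of height at most $n$. We apply it to $D=\langle\mathcal Y\rangle$ itself, without adjoining endpoints, which matches the paper's convention. We also need height $r$ of $\langle\mathcal Y\rangle$ to be exactly the hypothesis of Corollary~\ref{full-height}, namely a finite distributive sublattice of height $r$; this holds since finitely generated distributive lattices are finite. One should double-check the boundary ranks where $2r-3$ versus $\lfloor(r-1)^2/4\rfloor+1$ dominates, verifying that $f(r-1)+1\le N$ in each case. Since $N$ was defined as the maximum of these two quantities, this holds.
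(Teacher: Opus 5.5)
Your proposal is correct and is essentially the paper's argument. It combines Lemma~\ref{minimal-failures}, Corollary~\ref{full-height}, the bound $g(r-1)$ from Proposition~\ref{generator-bound}, and Theorem~\ref{adapted-helly}, applied to the $(m-1)$-member deletions. The only difference is the direction of the contradiction: the paper picks one deletion with no common adapted basis and concludes $m-1\le g(r-1)$, while you show that every deletion has height $r$, hence a common adapted basis, and then apply the Helly theorem.
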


\begin{proof}
Let $\cF$ be a minimal failure with $m$ members in a rank $r$ matroid.
By Lemma~\ref{minimal-failures}(c), it suffices to bound $m$; assume
$m>r+1$. Some one-member deletion $\cF'$ has no common adapted basis.
Otherwise every subfamily of at most $r+1$ members would have one,
and Theorem~\ref{adapted-helly} would give a common adapted basis
for $\cF$, contradicting its nondistributivity.

The lattice $D'=\langle\cF'\rangle$ is finite and distributive, and
$\cF'$ is irredundant. Its height is at most $r-1$: height $r$ would
give a common adapted basis by Corollary~\ref{full-height}. Hence
$m-1\le g(r-1)$. For $r=2,3$, Proposition~\ref{generator-bound}
gives $m\le r+1$, a contradiction. For $r\ge4$ it gives
\[
 m\le1+f(r-1)
 =\max\left\{2r-3,\,\left\lfloor\frac{(r-1)^2}{4}\right\rfloor+1\right\},
\]
as required.
\end{proof}

\subsection{The graphic lower bound}\label{subsec:lower}

Let $p,q\ge2$. The graph $G_{p,q}$ consists of $q+1$ internally
disjoint paths from $u$ to $v$. The $q$ short paths are $u,x_j,v$,
with edges $b_j=ux_j$ and $c_j=x_jv$. The long path is
$u,y_1,\ldots,y_p,v$, with edges $a_1=uy_1$,
$a_i=y_{i-1}y_i$ for $2\le i\le p$, and $w=y_pv$.
Its cycle matroid $M_{p,q}$ has rank $p+q+1$.
Figure~\ref{fig:graphic-obstruction} shows $G_{3,3}$.

\begin{figure}[ht]
\centering
\begin{tikzpicture}[
  x=1cm,y=1cm,
  every node/.style={font=\scriptsize},
  vertex/.style={circle,fill,inner sep=1.5pt}
]
\node[vertex,label=left:$u$] (u) at (0,0) {};
\node[vertex,label=right:$v$] (v) at (6,0) {};
\node[vertex,label=above:$x_1$] (x1) at (3,1.25) {};
\node[vertex,label=above:$x_2$] (x2) at (3,.42) {};
\node[vertex,label=below:$x_3$] (x3) at (3,-.42) {};
\node[vertex,label=below:$y_1$] (y1) at (1.25,-1.5) {};
\node[vertex,label=below:$y_2$] (y2) at (3,-1.5) {};
\node[vertex,label=below:$y_3$] (y3) at (4.75,-1.5) {};
\draw (u)--node[above left] {$b_1$} (x1)
         --node[above right] {$c_1$} (v);
\draw (u)--node[above] {$b_2$} (x2)
         --node[above] {$c_2$} (v);
\draw (u)--node[below] {$b_3$} (x3)
         --node[below] {$c_3$} (v);
\draw (u)--node[left] {$a_1$} (y1)
         --node[above] {$a_2$} (y2)
         --node[above] {$a_3$} (y3)
         --node[right] {$w$} (v);
\end{tikzpicture}
\caption{The graph $G_{3,3}$ in the lower-bound construction.}
\label{fig:graphic-obstruction}
\end{figure}

Over any field, $M_{p,q}$ is represented by
\begin{equation}\label{graphic-representation}
 a_i,\qquad b_j,\qquad c_j=b_j+z,\qquad
 w=a_1+\cdots+a_p+z,
\end{equation}
where $a_1,\ldots,a_p,b_1,\ldots,b_q,z$ are independent auxiliary
vectors and $z$ is not a ground element. Indeed, assign potentials
$0,z,-b_j$ and $-(a_1+\cdots+a_i)$ to $u,v,x_j$ and $y_i$,
respectively. Orient $b_j,a_i$ towards $u$ along their paths and
$c_j,w$ towards $v$. The edge differences are the displayed vectors,
and the nonzero vertex potentials form a basis, giving a change of
coordinates in the reduced incidence representation.

Set $\Omega=\Xi\sqcup\Upsilon$, where
$\Xi=\{\xi_1,\ldots,\xi_p\}$ and
$\Upsilon=\{\eta_1,\ldots,\eta_q\}$. For $S\subseteq\Omega$, put
\[
 R_S=\{a_i:\xi_i\in S\}\cup\bigcup_{\eta_j\in S}\{b_j,c_j\},
 \qquad F_S=\cl(R_S).
\]

\begin{lemma}\label{closure-lemma}
For every $S\subseteq\Omega$,
\begin{equation}\label{closure-formula}
 F_S=\begin{cases}
 R_S\cup\{w\},&\Xi\subseteq S\text{ and }S\cap\Upsilon\ne\varnothing,\\
 R_S,&\text{otherwise},
 \end{cases}
 \qquad
 \rk(F_S)=|S|+\mathbf1_{S\cap\Upsilon\ne\varnothing}.
\end{equation}
The map $S\mapsto F_S$ is injective and satisfies
$F_S\vee F_T=F_{S\cup T}$.
\end{lemma}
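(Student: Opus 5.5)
The plan is to compute the closure of $R_S$ directly, using the vector representation \eqref{graphic-representation} and the cycle structure of $G_{p,q}$. The circuits of $M_{p,q}$ are the cycles of the graph. Since the graph is a union of $q+1$ internally disjoint $u$--$v$ paths, every cycle is the union of two of these paths. So the cycles are the short cycles $\{b_j,c_j,b_k,c_k\}$ for $j\neq k$, and the long cycles $\{a_1,\ldots,a_p,w,b_j,c_j\}$.

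An edge $e\notin R_S$ lies in $\cl(R_S)$ exactly when some circuit $C$ satisfies $e\in C\subseteq R_S\cup\{e\}$. I will run through the possible edges $e$ in turn.
\begin{itemize}
\item If $e=a_i$ or $e=w$, the only circuits containing $e$ are long cycles. Such a cycle contains all of $a_1,\ldots,a_p,w$ and one pair $b_j,c_j$.
\item For $e=a_i\notin R_S$, the cycle would need $w\in R_S$. This never happens, since $w\notin R_S$ by definition.
\item For $e=w$, the cycle needs every $a_i\in R_S$, i.e.\ $\Xi\subseteq S$, and some $b_j,c_j\in R_S$, i.e.\ $S\cap\Upsilon\neq\varnothing$.
\item For $e=b_j$ with $\eta_j\notin S$ (and symmetrically $c_j$), every circuit through $b_j$ also contains $c_j$. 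But $c_j\notin R_S\cup\{b_j\}$, so $b_j$ is not in the closure.
\end{itemize}
Finally, $R_S\cup\{w\}$ is itself closed when $w$ is added. Indeed, closure is idempotent, and the edge analysis above was independent of whether $w$ is present, except for $a_i$. For $a_i\notin R_S$ with $w$ now present, we would need all other $a$'s present, i.e.\ $\xi_i\notin S$. That contradicts the case hypothesis $\Xi\subseteq S$. This gives the closure formula \eqref{closure-formula}.

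For the rank, I would compute $\rk(R_S)$ from the representation; the rank of $F_S$ is the same. The vectors $a_i$ for $\xi_i\in S$, together with $b_j,c_j$ for $\eta_j\in S$, span the same space as $\{a_i\}$, $\{b_j\}$ and $z$ for these indices. The vector $z$ is included exactly when $S\cap\Upsilon\neq\varnothing$, since then $z=c_j-b_j$. These auxiliary vectors are independent, so $\rk=|S|+\mathbf 1_{S\cap\Upsilon\neq\varnothing}$.

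Injectivity follows because $S$ can be read off from $F_S$: we have $\xi_i\in S$ iff $a_i\in F_S$, and $\eta_j\in S$ iff $b_j\in F_S$. This uses that $w$ is the only edge the closure can add.

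For joins, $F_S\vee F_T=\cl(R_S\cup R_T)=\cl(R_{S\cup T})=F_{S\cup T}$, because $R_{S}\cup R_T=R_{S\cup T}$ by definition and closure absorbs closures of parts.

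The only delicate point is making sure the circuit enumeration is complete and that adding $w$ cannot trigger further additions. Both are handled by the explicit path decomposition above.
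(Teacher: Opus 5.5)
Your proof is correct. For the rank, injectivity and join statements it matches the paper's argument; the closure formula is where you take a different route.

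The paper stays entirely inside the representation \eqref{graphic-representation}. It observes that $R_S$ spans exactly the independent auxiliary vectors $a_i$ ($\xi_i\in S$), $b_j$ ($\eta_j\in S$), and $z$ when $S\cap\Upsilon\ne\varnothing$. A ground element then lies in $F_S$ iff every auxiliary vector in its expression is among these, so one linear-algebra computation yields both the closure formula and the rank. You instead enumerate the circuits of $G_{p,q}$ as unions of two of the $q+1$ internally disjoint $u$--$v$ paths. You then apply the criterion that $e\in\cl(X)\setminus X$ iff $e\in C\subseteq X\cup\{e\}$ for some circuit $C$. This makes the closure formula independent of the representation, and it is arguably more transparent for a graphic matroid. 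The cost is that you switch to the representation for the rank, which the paper's single computation avoids; a spanning-forest count would have kept your argument purely graph-theoretic.

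One small point: the circuit criterion already describes $\cl(R_S)$ completely in one step, so your final check that adding $w$ triggers no further additions is unnecessary. The sentence ``we would need all other $a$'s present, i.e.\ $\xi_i\notin S$'' is also muddled. The actual contradiction is simply that $a_i\notin R_S$ means $\xi_i\notin S$, which is incompatible with $\Xi\subseteq S$.
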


\begin{proof}
By \eqref{graphic-representation}, $R_S$ spans the independent
auxiliary vectors $a_i$ with $\xi_i\in S$, $b_j$ with
$\eta_j\in S$, and also $z$ when $S\cap\Upsilon\ne\varnothing$.
This gives the rank formula. A ground element belongs to this span
exactly when all auxiliary vectors in its expression occur among
these vectors, giving the closure formula. Finally,
$F_S\setminus\{w\}=R_S$ determines $S$, and
\[
 F_S\vee F_T=\cl(R_S\cup R_T)=F_{S\cup T}.
\]
\end{proof}

\begin{proposition}\label{graphic-obstruction}
The $pq$ distinct flats
\[
 H_{ij}=F_{\{\xi_i\}\cup(\Upsilon\setminus\{\eta_j\})}
 =\{a_i\}\cup\bigcup_{k\ne j}\{b_k,c_k\}
 \qquad(1\le i\le p,\ 1\le j\le q)
\]
form a minimal failure of distributive generation in $M_{p,q}$.
\end{proposition}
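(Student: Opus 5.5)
The plan is to handle the three claims separately: the $H_{ij}$ are distinct flats, every proper subfamily is distributive, and the whole family is not. Put $S_{ij}=\{\xi_i\}\cup(\Upsilon\setminus\{\eta_j\})$, so $H_{ij}=F_{S_{ij}}$. Since $p\ge2$, $\Xi\not\subseteq S_{ij}$. By Lemma~\ref{closure-lemma} this gives $H_{ij}=R_{S_{ij}}$, which is the displayed formula. Injectivity of $S\mapsto F_S$ gives distinctness.

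\emph{Key intersection formula.} First compare $F_S\cap F_T$ with $F_{S\cap T}$ using \eqref{closure-formula}. Both sets equal $R_{S\cap T}$ away from $w$, since $R_S\cap R_T=R_{S\cap T}$. The element $w$ lies in $F_S\cap F_T$ iff $\Xi\subseteq S$, $\Xi\subseteq T$ and both meet $\Upsilon$. It lies in $F_{S\cap T}$ iff moreover $S\cap T\cap\Upsilon\neq\varnothing$. Hence $F_S\cap F_T=F_{S\cap T}$ unless $S,T\supseteq\Xi$ both meet $\Upsilon$ while $S\cap T\cap\Upsilon=\varnothing$. Together with $F_S\vee F_T=F_{S\cup T}$, this controls everything.

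\emph{Proper subfamilies.} It suffices to delete a single $H_{kl}$. Let $\mathcal L$ be the lattice of subsets of $\Omega$ generated under $\cup,\cap$ by the remaining $S_{ij}$. Every $S_{ij}$ with $(i,j)\ne(k,l)$ satisfies $\xi_k\in S\Rightarrow\eta_l\in S$. By \eqref{implication-sublattice}, every member of $\mathcal L$ satisfies it too. So if $S,T\in\mathcal L$ contain $\Xi$, they both contain $\eta_l$, and the exceptional case above never occurs. Thus $S\mapsto F_S$ is a lattice homomorphism from $\mathcal L$ into $L(M_{p,q})$. Its image is closed under meets and joins and is generated by the remaining $H_{ij}$, so it equals their generated lattice. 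A homomorphic image of the distributive lattice $\mathcal L$ is distributive.

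\emph{The whole family.} Here I will exhibit an explicit violation; this is the step needing care, since plain joins of the $H_{ij}$ never realize the exceptional case when $q\ge3$. First take meets. Put $V_{ij}=\bigwedge_{k\ne j}H_{ik}$; its underlying set is $R_{\{\xi_i,\eta_j\}}=\{a_i,b_j,c_j\}$, which is a flat. Next set $A=\bigvee_iV_{i1}=F_{\Xi\cup\{\eta_1\}}$ and $B=\bigvee_iV_{i2}=F_{\Xi\cup\{\eta_2\}}$. By \eqref{closure-formula} both contain $w$, so $w\in A\wedge B$.

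If the generated lattice were distributive, then $A\wedge B=\bigvee_{i,i'}(V_{i1}\wedge V_{i'2})$. Each $V_{i1}\cap V_{i'2}\subseteq\{a_i\}\cap\{a_{i'}\}$, so this join lies in $\cl(R_\Xi)=F_\Xi=R_\Xi$, which does not contain $w$. This contradiction shows the full family is not distributive. Hence it is a minimal failure.
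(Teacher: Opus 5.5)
Your proof is correct and takes the paper's route. Distinctness comes from injectivity of $S\mapsto F_S$, proper subfamilies are distributive because $S\mapsto F_S$ preserves meets on label sets satisfying $\xi_k\in S\Rightarrow\eta_l\in S$, and the whole family fails because $w\in F_{\Xi\cup\{\eta_1\}}\cap F_{\Xi\cup\{\eta_2\}}$ while distributivity would confine that meet to $F_\Xi$. The differences are cosmetic: you use the generated lattice and its homomorphic image, so injectivity is not needed there, and you expand over the flats $V_{ij}=\{a_i,b_j,c_j\}$ instead of citing the single dual-distributive instance $F_\Xi\vee(Y_1\wedge Y_2)\subsetneq(F_\Xi\vee Y_1)\wedge(F_\Xi\vee Y_2)$ with $Y_j=\{b_j,c_j\}$.
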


\begin{proof}
The displayed description and distinctness follow from
Lemma~\ref{closure-lemma}, since $p\ge2$.

First omit one member $H_{ij}$. The labels of all remaining members
satisfy $\xi_i\in S\Rightarrow\eta_j\in S$ and hence belong to the
distributive lattice $L_{ij}$ of subsets satisfying this implication.
The map $S\mapsto F_S$ preserves joins by Lemma~\ref{closure-lemma}.
It also preserves meets on $L_{ij}$: we have
$R_S\cap R_T=R_{S\cap T}$, and if $w\in F_S\cap F_T$, both labels
contain $\Xi$, so the implication puts $\eta_j$ in both. Thus
$w\in F_{S\cap T}$. Together with injectivity, this shows that the
image of $L_{ij}$ is a distributive sublattice containing all
remaining $H_{kl}$. Every proper subfamily is therefore distributive.

The full family generates
\[
 X_i=\bigcap_{j=1}^q H_{ij}=\{a_i\},\qquad
 Y_j=\bigcap_{i=1}^p\ \bigcap_{k\ne j}H_{ik}=\{b_j,c_j\}.
\]
Here $q\ge2$ ensures the second intersection has a nonempty index
set, and $p\ge2$ removes all $a_i$. Put
$F_\Xi=\bigvee_iX_i=\{a_1,\ldots,a_p\}$. For distinct $j,k$,
Lemma~\ref{closure-lemma} gives
\[
 F_\Xi\vee(Y_j\wedge Y_k)=F_\Xi
 \ \subsetneq\ (F_\Xi\vee Y_j)\wedge(F_\Xi\vee Y_k)
 =F_\Xi\cup\{w\}.
\]
Thus the dual distributive law fails.
\end{proof}

\begin{proof}[Completion of the proof of Theorem~\ref{dist-bounds}]
Example~\ref{singleton-failure} gives the lower bound $r+1$. For
$r\ge5$, choose $p=\lfloor(r-1)/2\rfloor$ and
$q=\lceil(r-1)/2\rceil$. Then $p,q\ge2$, and
Proposition~\ref{graphic-obstruction} gives a minimal failure of size
$pq=\lfloor(r-1)^2/4\rfloor$ in rank $r$. This bound exceeds $r+1$
exactly when $r\ge7$. The upper bounds are the values of the maximum
in Proposition~\ref{dist-upper}.

Restricting to a class of matroids preserves the upper bounds. Both
lower-bound constructions are graphic, and graphic matroids are
representable over every field \cite[Chapter~5]{Oxley2011}, so the
restricted Helly numbers satisfy the same bounds.
\end{proof}

\end{document}